\renewcommand{\leq}{\leqslant}
\renewcommand{\geq}{\geqslant}
\renewcommand{\P}{\mathbb{P}}
\newcommand{\E}{\mathbb{E}}
\DeclareFixedFont{\beaupetit}{T1}{ftp}{b}{n}{2cm}
\newtheorem{theorem}{Theorem}[]
\newtheorem{proposition}[]{Proposition}
\newtheorem{lemma}[]{Lemma}
\theoremstyle{definition}
\newtheorem*{remark}{Remark}
\newcommand{\tr}{\mathbf{t}}
\title{\textsc{Parking on the infinite binary tree}}
\author{
David \textsc{Aldous}\thanks{U.C. Berkeley.\hfill  \href{mailto:}{\texttt{aldousdj@berkeley.edu}}}\qquad\&\qquad
Alice \textsc{Contat}\thanks{Universit\'e Paris-Saclay.\hfill  \href{mailto:alice.contat@universite-paris-saclay.fr}{\texttt{alice.contat@universite-paris-saclay.fr}}}
\qquad\&\qquad
Nicolas \textsc{Curien}\thanks{Universit\'e Paris-Saclay.\hfill  \href{mailto:nicolas.curien@gmail.com}{\texttt{nicolas.curien@gmail.com}}}
\qquad\&\qquad
Olivier \textsc{H\'enard}\thanks{Universit\'e Paris-Saclay.\hfill  \href{mailto:olivier.henard@universite-paris-saclay.fr}{\texttt{olivier.henard@universite-paris-saclay.fr}}}}
\date{}
\begin{document}
\maketitle 

\begin{abstract}Let $(A_u : u \in \mathbb{B})$ be i.i.d.~non-negative integers that we interpret as car arrivals on the vertices of the full binary tree $ \mathbb{B}$. Each car tries to park on its arrival node, but if it is already occupied,  it drives towards the root and parks on the first available spot. It is known \cite{GP19,BBJ19} that the parking process on $ \mathbb{B}$ exhibits a phase transition in the sense that either a finite number of cars do not manage to park in expectation (subcritical regime) or all vertices of the tree contain a car and infinitely many cars do not manage to park (supercritical regime).  We  characterize those regimes in terms of the law of $A$ in an explicit way. We also study in detail the critical regime as well as the phase transition which turns out to be ``discontinuous''.
\end{abstract}

\begin{figure}[!h]
 \begin{center}
 \includegraphics[height=5cm]{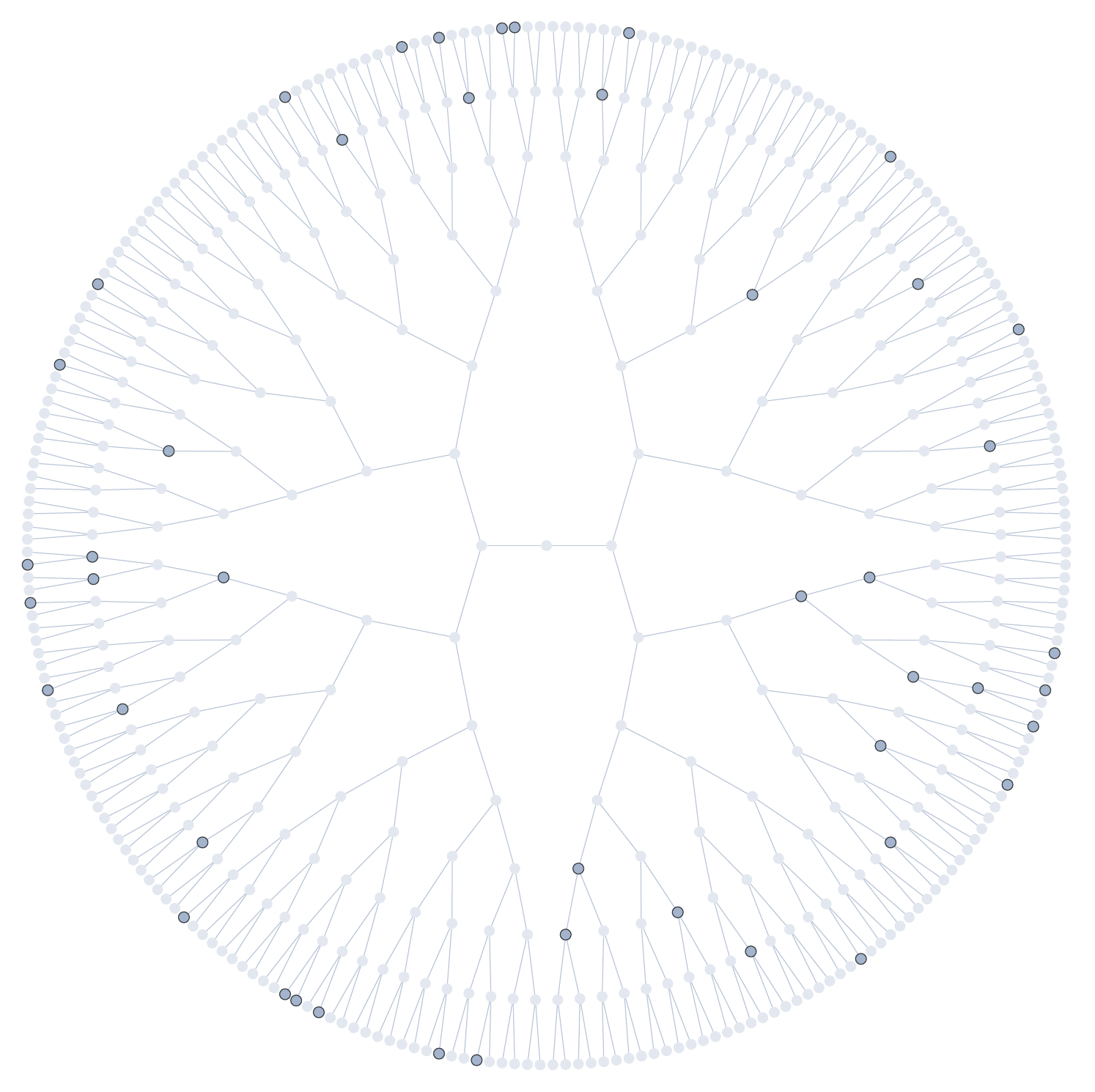}
   \includegraphics[height=5cm]{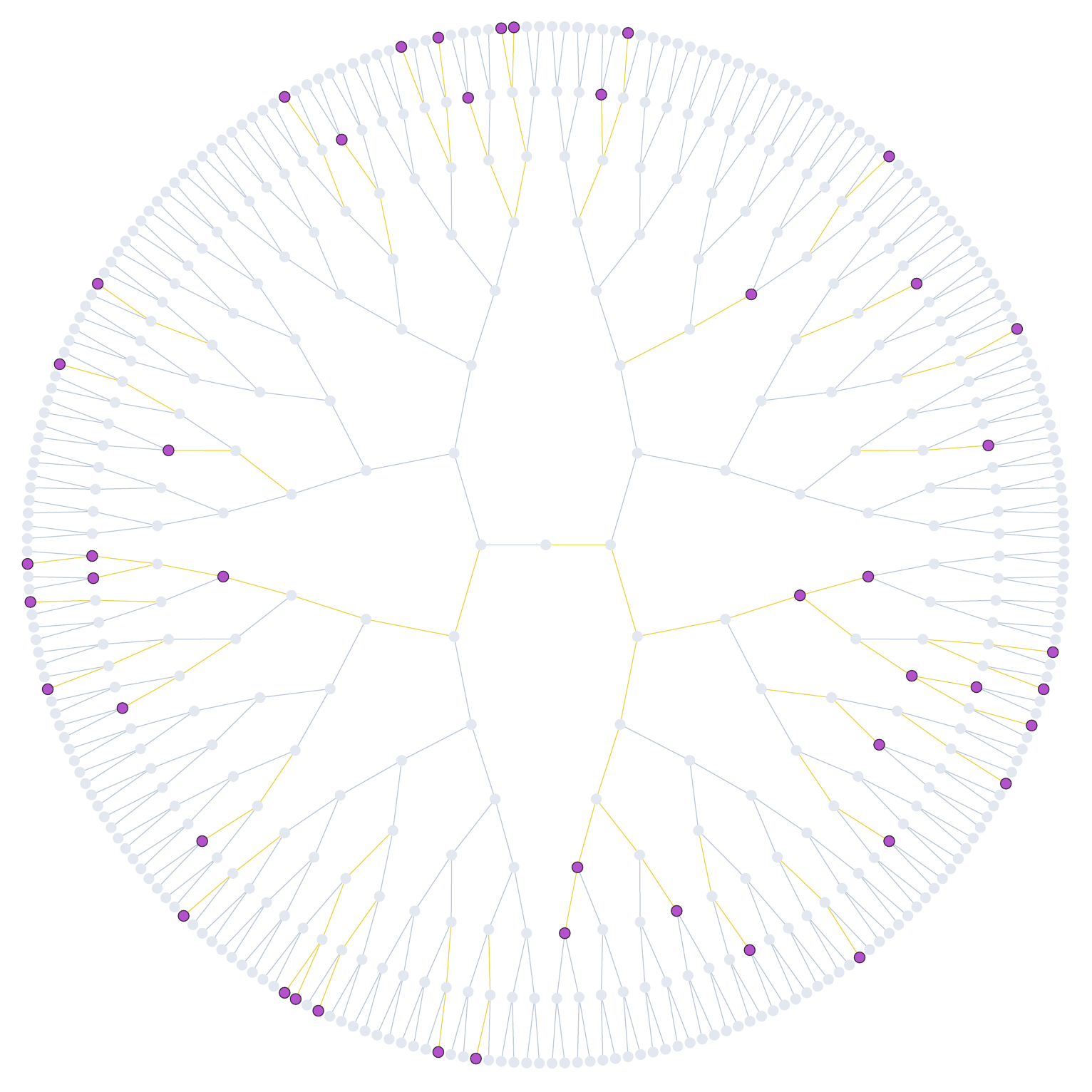}
    \includegraphics[height=5cm]{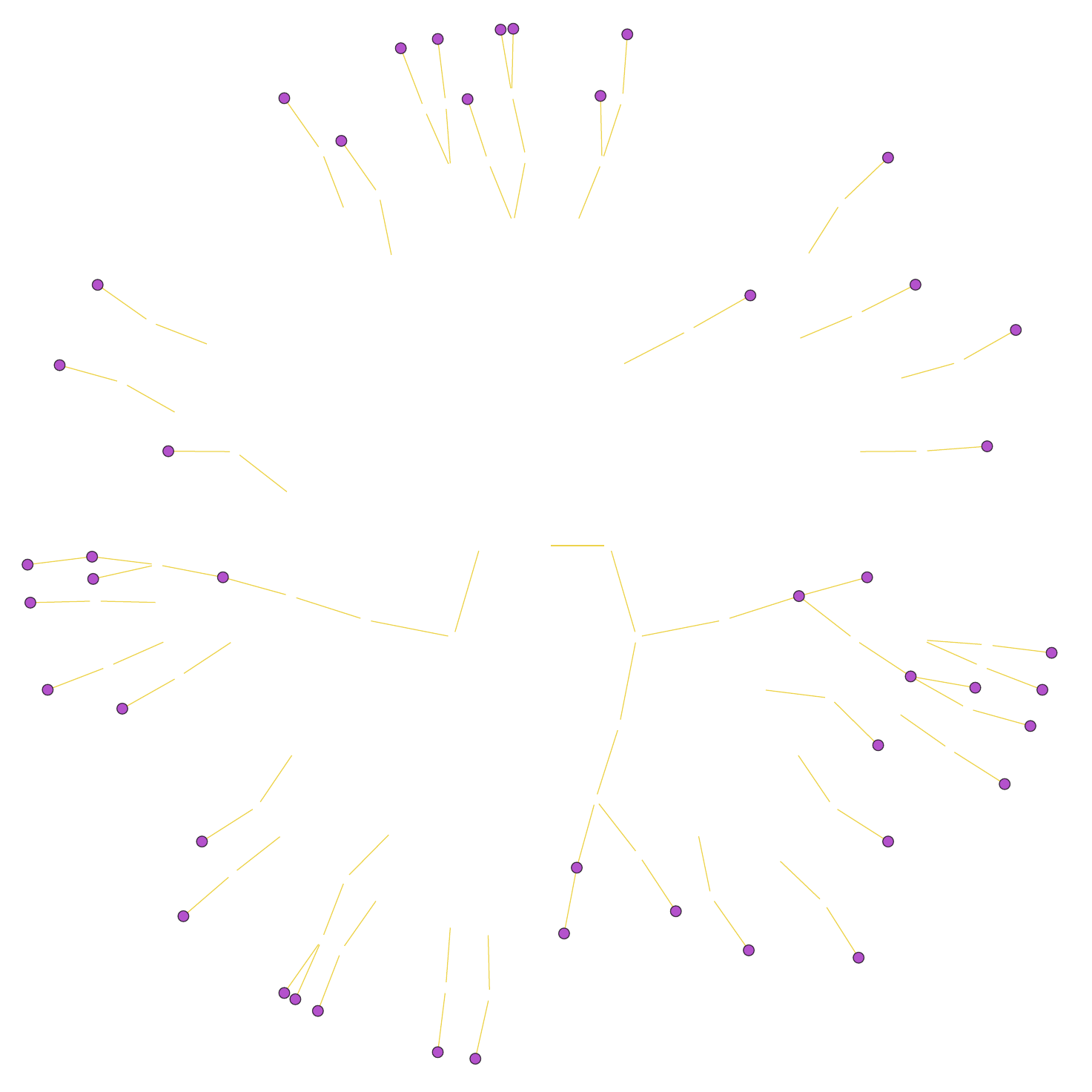}
 \caption{Simulation of the parking process on the first $9$ levels of the full binary tree (the edges are oriented towards the origin of the tree which is at the center of the figures). The dotted vertices on the left figure initially contain $2$ cars whereas the others are void. In the middle and right figures, the highlighted edges have seen a positive flux of car.}
 \end{center}
 \end{figure}

\section {Introduction}
The parking process is a central algorithm in combinatorics and probability. When the underlying graph is an oriented line, it was first studied by Konheim \& Weiss \cite{konheim1966occupancy} in relation with hash tables and it has led to many developments in probability notably via connections with the Brownian continuum random tree and the additive coalescent \cite{chassaing2002phase}. Recently, Lackner \& Panholzer \cite{LaP16} started the systematic study of the parking functions on finite rooted trees. This triggered an intense activity on the model of parking on a random \emph{critical} Galton--Watson tree. In particular, a phase transition was proved to occur and the threshold was located in an increasing level of generality \cite{GP19,CH19,contat2020sharpness}. Furthermore a surprising connection with the Erd\"os--R\'enyi random graph and the multiplicative coalescent was unraveled in \cite{ConCurParking}. 

However, much less is known about the parking scheme on \textbf{supercritical} Galton--Watson trees, apart from the existence of a phase transition \cite{GP19,BBJ19} and despite an intense activity on the closely related Derrida--Retaux model \cite{chen2019derrida}. The goal of this paper is to close this gap and locate and study the phase transition in the case of the parking process on the infinite binary tree (see Section \ref{sec:comments} for extensions).

\paragraph{Parking on the infinite binary tree.} Consider the full planar rooted binary tree. Its vertices can be conveniently represented by the finite words on two letters $ \mathbb{B}  = \cup_{n \geq 0} \{0,1\}^n$, with $\{0,1\}^0 = \varnothing$ being the root of the tree and with edges between the words $u$ and $u0$ and the words $u$ and $u1$. Those vertices  will be interpreted as free parking spots, each spot accommodating at most $1$ car. On top of that tree, we consider a non-negative integer labeling  $(A_u : u \in \mathbb{B})$  representing the number of cars arriving on each vertex $u \in \mathbb{B}$. Each car tries to park on its arrival vertex, and if the spot is occupied, it travels downwards in direction of the root of the tree until it finds an empty vertex to park. If there is no such vertex on the path towards the root $ \varnothing$, the car exits the tree, contributing to the \emph{flux} of cars at the root. If we introduce the random variable $$X := \mbox{ number of cars which have visited } \varnothing,$$ the outgoing flux of cars is then simply  $\mathrm{F} = (X-1)_{+} = \max(X-1,0)$. As we will see in Section \ref{sec:parkingpropre}, the final configuration (flux and status void/occupied for the vertices), and in particular the value of variables $X$ and $ \mathrm{F}$, does not depend  upon the order chosen to park the cars.

\begin{figure}[!h]
 \begin{center}
 \includegraphics[width=14cm]{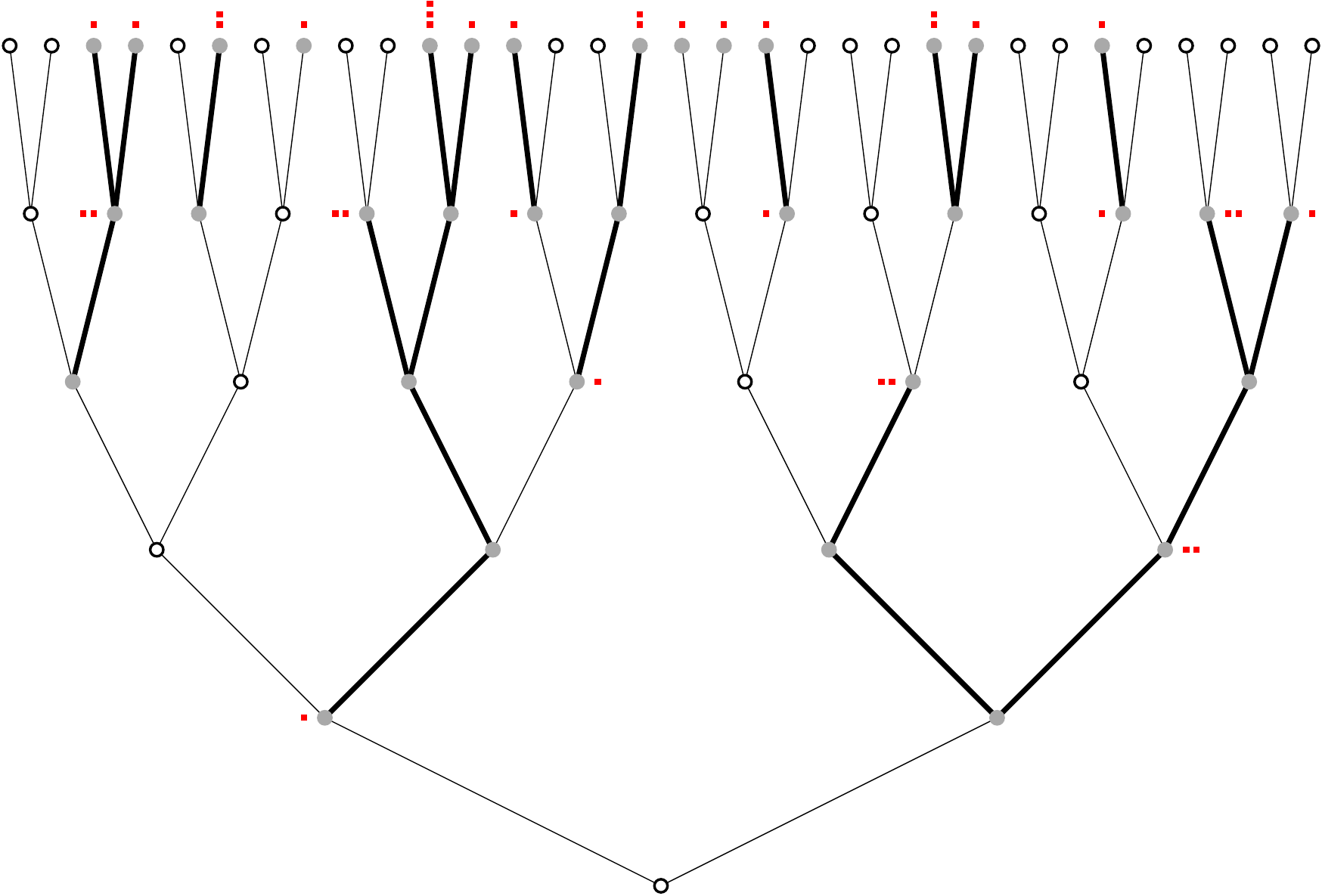}
 \caption{Illustration of the parking process in the first $5$ levels of the full binary tree. The car arrivals are represented by red squares (including the cars that may come from higher levels on top of the tree). After the parking process, the vertices accommodating a car are displayed in gray, whereas the free spots are displayed in white. The connected components of parked cars are drawn with thick lines, they are  \emph{fully parked trees}. \label{fig:parkingbinary}}
 \end{center}
 \end{figure}

In the remainder of this paper we shall suppose that the car arrivals $(A_u : u \in \mathbb{B})$ are i.i.d.~with a given distribution $\mu = (\mu_{k} : k \geq 0)$ on $\{0,1,2,3,...\}$. To avoid trivialities, we always suppose that $\mu(\{0,1\}) < 1$ for otherwise the cars would always park on their arrival node. We let $$G(x) = \sum_{k \geq 0} \mu_{k} x^k$$ be the generating function of the law $\mu$. One can then establish a dichotomy (see Lemma \ref{lem:sub/super} and also \cite{GP19,BBJ19} as well as  Proposition  \ref{prop:noblack} below):
\begin{itemize}
\item Either the number $X$ of cars that visited the root $\varnothing$ has a finite mean and all clusters of parked vertices are finite almost surely, we call this phase the  \textbf{subcritical} regime,
\item Or almost surely $X= \infty$ and actually, all vertices of $ \mathbb{B}$ are occupied after the parking process, we call this phase the \textbf{supercritical} regime.
\end{itemize}
We shall furthermore distinguish the \textbf{critical} regime, when it is not possible to stochastically increase $\mu$ and stay subcritical. A first trivial remark is that when $ \mathbb{E}[A]>1$ the process is necessarily supercritical (since there are more cars than parking spots on average). Our main result is then a characterization of those regimes explicitly in terms of the generating function $G$ of $\mu$:
\begin{theorem}[Location of the phase transition] \label{thm:phase} Suppose that there exists  $t_c \in (0, \infty)$ such that 
$$t_c = \min \{ t \geq 0:\ 2(G(t)- t G'(t))^2 = t^2 G(t) G''(t) \} \qquad (\star).$$
Then the parking process is subcritical if and only if 
 \begin{eqnarray} (t_c-2)G(t_c) \geq t_c(t_c-1)G'(t_c). \label{eq:threshold}\end{eqnarray} 
\end{theorem}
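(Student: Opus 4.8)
The plan is to reduce the statement to a single quadratic functional equation for the generating function of the flux, and then to read off the phase from the geometry of its discriminant. First I would introduce the \emph{flux} $\Phi_u$, the number of cars exiting the subtree dangling above a vertex $u$ in the direction of the root. Inspecting the parking rule at one vertex (each spot absorbs one car) gives the recursive distributional equation $\Phi \overset{(d)}{=}(A+\Phi'+\Phi''-1)_+$, where $A\sim\mu$ and $\Phi',\Phi''$ are independent copies of $\Phi$. Writing $f(s)=\E[s^{\Phi}]$ and $a=f(0)=\P(\Phi=0)$, and using $\P(A+\Phi'+\Phi''=0)=\mu_0 a^2$, this translates (after multiplying through by $s$) into
\[ G(s)\,f(s)^2 - s\,f(s) + c\,(s-1)=0, \qquad c:=\mu_0\,\P(\Phi=0)^2 . \]
By the dichotomy of Lemma~\ref{lem:sub/super} (see also Proposition~\ref{prop:noblack}), subcriticality is equivalent to $\Phi<\infty$ a.s., i.e.\ to $f$ being a genuine probability generating function with $a>0$, equivalently $c>0$; in the supercritical phase $f\equiv 0$ and $c=0$. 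Crucially, $f$ is the \emph{least} fixed point, obtained as the increasing limit of the fluxes of the finite truncations of the tree, so everything reduces to deciding for which $\mu$ this monotone iteration converges to a solution with $c>0$.

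Solving the quadratic and keeping the branch with $f(0)=a\ge 0$ gives $f(s)=\dfrac{s+\sqrt{\Delta_c(s)}}{2G(s)}$ with $\Delta_c(s)=s^2-4c\,G(s)(s-1)$. On $[0,1]$ one has $\Delta_c\ge s^2>0$, so $f$ is harmless there, and the finiteness and tail of $\Phi$ are governed by the first singularity of $f$ beyond $1$, located at the smallest root $\rho>1$ of $\Delta_c$, i.e.\ at the smallest $\rho$ with $c=c(\rho)$, where I set $c(s):=\dfrac{s^2}{4G(s)(s-1)}$ for $s>1$. I would then study this function: it runs from $+\infty$ (as $s\downarrow 1$) down to $0$ (as $s\uparrow\infty$), and a direct differentiation shows that $c'(s)$ has the sign of $(s-2)G(s)-s(s-1)G'(s)$. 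In particular the inequality \eqref{eq:threshold} is exactly the statement that $c'(t_c)\ge 0$: that $t_c$ sits on an increasing stretch of $c$.

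The heart of the matter is the selection of the correct $c$ and the location of the threshold, which I expect to be a saddle-node (fold) mechanism, consistent with the announced \emph{discontinuous} transition. The claim to establish is that a proper flux exists precisely when $c(\cdot)$ develops a local minimum on $(1,\infty)$: this fold both pins the dominant singularity of $f$ and makes a nonzero fixed point attracting for the iteration, whereas in the absence of a fold the iteration runs down to $c=0$ (supercritical). As $\mu$ is stochastically increased the fold shrinks and disappears through the degenerate point where the local minimum and maximum coalesce, i.e.\ where $c'=c''=0$ simultaneously. Imposing $c'=0$ and using it to simplify $c''=0$ collapses (after eliminating the $(s-1)$ factors) to exactly the second–order relation $(\star)$, $2(G-tG')^2=t^2GG''$. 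It is clarifying to record $(\star)$ in tilted form: with $\hat\mu_t(k)\propto \mu_k t^k$ it reads $2\big(1-\E_{\hat\mu_t}[A]\big)^2=\E_{\hat\mu_t}[A(A-1)]$, a clean balance between the drift and the (factorial) fluctuation of the tilted law. Thus $t_c$ from $(\star)$ marks the coalescence location, and the sign of $c'(t_c)$, namely \eqref{eq:threshold}, decides on which side of the fold transition $\mu$ lies.

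The main obstacle—and the step demanding genuine work—is the passage from ``solution of the functional equation'' to ``bona fide flux'': the equation alone does not pin down $c$ (it is satisfied formally by a one–parameter family), so I must prove that the probabilistic least fixed point of the first paragraph coincides with the analytically selected branch. Concretely this requires (i) showing the Taylor coefficients of the selected $f_c$ are nonnegative and that no complex singularity overtakes $\rho$ before the fold, via the monotone iteration combined with a transfer/positivity argument for the square–root singularity, and (ii) proving that the existence of the fold is equivalent to convergence of the iteration to $c>0$ (the saddle-node/stability analysis). The remaining work is bookkeeping: carrying out rigorously the elimination that yields $(\star)$ and justifying the minimality in its definition (whence the ``$\min$''), treating the equality case of \eqref{eq:threshold} as the critical regime, and confirming discontinuity by exhibiting that $a=\P(\Phi=0)$ (equivalently $c$, recalling $\E[\Phi]=1-\E[A]-c$) jumps from a strictly positive value down to $0$ as $\mu$ crosses the critical locus.
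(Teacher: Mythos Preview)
Your setup is sound and genuinely different from the paper's: the quadratic for $f(s)=\E[s^{\Phi}]$, the discriminant $\Delta_c$, the curve $c(s)=s^2/(4G(s)(s-1))$, and the observation that $c'(t_c)\ge 0$ is literally \eqref{eq:threshold} are all correct. The gap is in the step where you identify $(\star)$ with the fold of $c(\cdot)$. What your elimination actually proves is that $\{c'=0,\ c''=0\}\Longleftrightarrow\{c'=0,\ (\star)\}$; the bare condition $(\star)$ is strictly weaker than ``$t$ is a fold point of $c$''. For a fixed non-critical $\mu$, the point $t_c$ defined by $(\star)$ is in general \emph{not} a critical point of $c$ at all: in the Binary$_{0/2}$ example at $\alpha=1/28$ one finds $t_c=\sqrt{55/3}\approx 4.28$, while the local minimum and maximum of $c$ sit at the roots $t_1\approx 2.18$ and $t_2\approx 6.06$ of $(t-2)G=t(t-1)G'$. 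So ``thus $t_c$ from $(\star)$ marks the coalescence location and the sign of $c'(t_c)$ decides the side'' is a heuristic that is only valid \emph{at} the critical member of a one-parameter family, not a proof for a fixed $\mu$. Note also that the $+$ branch of your quadratic satisfies the self-consistency $c=\mu_0 f_c(0)^2$ automatically for \emph{every} $c$, so the flux RDE alone really does not select $c$---the ``saddle-node/stability'' step you flag as remaining is the whole proof, and you have not indicated how $(\star)$ would enter it.

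The paper resolves both issues by introducing a second variable that you are missing: the \emph{size} $x$ of the fully parked cluster at the root. This yields a Tutte-type equation for $\mathbf{F}(x,y)=1+F(x,y)$ with catalytic variable $y$, solved by the kernel method via a change of variables $x=\psi(Y)$ with $\psi(y)=y(2G(y)-yG'(y))/(4G(y)^2)$. Here $(\star)$ is exactly $\psi'(t_c)=0$: it pins down the radius of convergence $x_c$ of $\mathbf{F}_0(x)=\mathbf{F}(x,0)$, and has nothing to do \emph{a priori} with the fold of your $c(\cdot)$. The selection of $c$ is the combinatorial identity $c=p_\circ$ together with $1=\mu_0\,p_\circ\,\mathbf{F}_0(p_\circ)^2$ (Proposition~\ref{prop:criti}); since $x\mapsto\mu_0 x\,\mathbf{F}_0(x)^2$ is increasing on $[0,x_c]$, subcriticality is exactly $\mu_0 x_c\,\mathbf{F}_0(x_c)^2\ge 1$, which evaluated at $Y(x_c)=t_c$ via \eqref{eq:F1} gives \eqref{eq:threshold}. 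A posteriori your fold picture is vindicated---in the variable $Y$ the equation $1=\mu_0 x\,\mathbf{F}_0(x)^2$ becomes $(Y-2)G(Y)=Y(Y-1)G'(Y)$, and at any of its roots one checks $\psi=c$, so indeed $p_\circ=c(t_1)$---but the mechanism that discovers this and ties it to $(\star)$ is the size variable and the kernel method, not a direct analysis of the flux equation.
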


The condition $(\star)$ on the existence of $t_c$ is mild and is for example verified for all generating functions with infinite radius of convergence (see Remark \ref{rek:RCVinfty}). When $(\star)$ is not verified, we provide a method to check if we are in the subcritical phase, see Section \ref{sec:non-generic}. Checking the signs of the two sides of the inequality \eqref{eq:threshold}, see Remark \ref{tcgeq2}, we see in particular that the generating function $G$ must have a radius of convergence at least $2$ to be in the subcritical phase. This can easily be explained by probabilistic arguments: otherwise, the maximum of $2^{n}$ independent copies of a random variable with law $\mu$ is larger than $n$ with high probability, so that the car arrivals at the single level $n$ of $ \mathbb{B}$ suffice to guarantee that the root $\varnothing$ is occupied,  see Lemma \ref{lem:sub/super}. 
The same argument actually even proves that the radius of convergence of $X$ (which is stochastically larger than $A$) must stay above $2$ in the subcritical regime. 
Notice that deciding whether $ \mu$ is subcritical for parking depends in a subtle way on the distribution as opposed to the case of critical Galton--Watson trees \cite{GP19,CH19,contat2020sharpness} where its depends only on the first two moments.

 Let us  give a couple of examples of application of our theorem in  the case of a car arrival distribution that is parametrized by a family $(\mu_{\alpha} : 0 \leq \alpha)$ which is  stochastically increasing with mean $\alpha \geq 0$: in this case the parking is subcritical if $ \alpha \leq \alpha_{c}$ and supercritical if $\alpha >\alpha_{c}$, for some threshold $ \alpha_{c}$ depending on the family of laws:  \begin{itemize}
\item \textit{Binary$_{\, 0/2}$ arrivals.} If $\mu_{\alpha} = (1- \frac{\alpha}{2}) \delta_0 + \frac{\alpha}{2} \delta_2$, then the critical threshold is $$ \alpha_{c}( \mathrm{Binary_{\,0/2}}) = \frac{1}{14} \cdot$$ This settles an open problem of Bahl, Barnet \& Junge \cite{BBJ19}. Obviously the value of $ \alpha_{c}$ is in agreement with the bounds $ \frac{1}{32}\leq \alpha_c \leq \frac{1}{2} $ of \cite[Proposition 3.5]{GP19} and improved to $ 0.03175\leq \alpha_c \leq 0.08698$ in \cite[Proposition 4]{BBJ19}.
\item \textit{Binary$_{\, 0/k}$ arrivals.} More generally if $\mu_{\alpha} = (1- \frac{\alpha}{k}) \delta_0 + \frac{\alpha}{k} \delta_k$ for some $k \in \{2,3,4,...\}$, then the threshold is
$$ \alpha_c( \mathrm{Binary_{\, 0/k}}) = \frac{k}{1 + 2^{-k-2} \left( 3 + \sqrt{ \frac{k+7}{k-1}}\right)^k \left( (k-1)(k+4)+ k \sqrt{(k+7)(k-1)}\right)} .$$
\item \textit{Poisson arrivals.} If $\mu_{\alpha}$ is Poisson with mean $ \alpha$, then 
$$  \alpha_{c}( \mathrm{Poisson}) = 3 - 2 \sqrt{2}.$$
\item \textit{Geometric arrivals.} If $\mu_{\alpha} = p^{k}(1-p)$ for $k \geq 0$ is a geometric law with mean $ \alpha = \frac{p}{1-p}$ then $p_{c}(\mathrm{Geometric}) = 1/9$ and 
$$ \alpha_{c}( \mathrm{Geometric}) = \frac{1}{8} \cdot$$
\end{itemize}

\paragraph{The critical regime.} Let us now focus more precisely on the critical regime : we assume that $\mu$ is subcritical (and $(\star)$ holds) and that it is not possible to stochastically increase $\mu$ while remaining subcritical. As we shall see in Section \ref{sec:proba}, this means that the inequality in \eqref{eq:threshold} is actually an equality. Recall that  we denoted by $X$ the number of cars that \emph{visited} the root $ \varnothing$ of $ \mathbb{B}$ during the parking process. We set for $k \geq 0$, $p_{k} = \mathbb{P}(X=k)$ and will use the shorthands $p_{\circ} =p_{0}$ and $p_{\bullet} = p_1$ for respectively the probability that the root is void and the probability that the root is at the bottom of a parked cluster without flux, after parking. In the following, we shall call \emph{white} the clusters of void vertices, and \emph{black} the clusters of parked vertices.

\begin{theorem}[Critical computations] \label{thm:critical} Suppose that $(\star)$ holds and that \eqref{eq:threshold} is an equality. Then almost surely the root $\varnothing$ is void or it belongs to a finite black cluster, and we have
$$ p_{\circ} = \frac{t_c^2}{4(t_c-1)G(t_c)}  \quad \mbox{ and } \quad  p_{\bullet} = \sqrt{ \frac{p_{\circ}}{ \mu_0}} - p_{\circ}.$$
\end{theorem}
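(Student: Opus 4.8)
The plan is to use the recursive structure of the binary tree to produce a functional equation for the generating function $f(s) = \mathbb{E}[s^X]$ of the number $X$ of cars visiting the root, and then to extract $p_\circ$ from the behaviour of this equation at its dominant singularity and $p_\bullet$ from its behaviour at the origin. First I would record the recursive distributional identity coming from the two subtrees at the root, namely $X = A + (X_0-1)_+ + (X_1-1)_+$ where $A \sim \mu$ and $X_0,X_1$ are independent copies of $X$, all independent. Since the flux $(X-1)_+$ vanishes exactly when $X\le 1$, its generating function is $\phi(s) = \mathbb{E}[s^{(X-1)_+}] = (f(s) - p_\circ(1-s))/s$, and independence turns the identity into $f = G\cdot \phi^2$, i.e. the fundamental functional equation
\[ s^2 f(s) = G(s)\,\bigl(f(s) - p_\circ(1-s)\bigr)^2. \tag{$\dagger$} \]

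Before exploiting $(\dagger)$ I would invoke the first assertion of the statement — that at criticality the root is almost surely void or at the bottom of a finite black cluster, which I take from Proposition~\ref{prop:noblack} — to guarantee $X<\infty$ almost surely, so that $f$ is a genuine probability generating function with $f(1)=1$, analytic on a disc of radius $R$. The formula for $p_\bullet$ is then immediate: evaluating $f = G\phi^2$ at $s=0$ and using $\phi(0) = \mathbb{P}(X\le 1) = p_\circ + p_\bullet$ gives $p_\circ = \mu_0(p_\circ+p_\bullet)^2$, and taking the nonnegative root yields $p_\bullet = \sqrt{p_\circ/\mu_0} - p_\circ$.

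The substance of the proof is the identification of $p_\circ$. Writing $\Phi(s,f) = s^2 f - G(s)\bigl(f-p_\circ(1-s)\bigr)^2$, the branch of $f$ selected at the origin (the one with $f'(0)=p_\bullet\ge 0$) can only become singular where the implicit function theorem degenerates, that is where $\partial_f\Phi = s^2 - 2G(s)\bigl(f - p_\circ(1-s)\bigr) = 0$ holds together with $\Phi = 0$; equivalently where the discriminant $\Delta(s) = s^2 + 4G(s)p_\circ(1-s)$ of $(\dagger)$ (as a quadratic in $f$) vanishes. Solving this pair and eliminating $f$ gives successively $f - p_\circ(1-s) = s^2/(2G(s))$, then $f = s^2/(4G(s))$, and finally $p_\circ(s-1) = s^2/(4G(s))$. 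Thus a singularity located at $s$ forces $p_\circ = s^2/\bigl(4G(s)(s-1)\bigr)$, so everything reduces to locating the dominant singularity.

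This localisation is where I expect the real difficulty to lie. The key claim is that in the critical regime the dominant singularity sits exactly at $t_c$ and is a \emph{double} root of $\Delta$, so that $\Delta(t_c) = \Delta'(t_c) = 0$. I would establish this by a continuity/monotonicity argument along a stochastically increasing family of arrival laws: in the subcritical phase $f$ extends analytically past $1$ and $\Delta$ stays positive, while the supercritical phase is precisely where $\Delta$ is forced to become negative, so criticality is the threshold at which the minimum of $\Delta$ first touches $0$ — a tangency, hence a double root. The condition $\Delta(t_c)=0$ then gives immediately $p_\circ = t_c^2/\bigl(4(t_c-1)G(t_c)\bigr)$, which is the claimed expression, while the companion condition $\Delta'(t_c)=0$, after inserting this value of $p_\circ$, is exactly the equality $(t_c-2)G(t_c)=t_c(t_c-1)G'(t_c)$ of \eqref{eq:threshold}; that this tangency point is the very point singled out by $(\star)$ is furnished by the singularity analysis underlying Theorem~\ref{thm:phase}. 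I would finish by checking consistency of the chosen branch, namely that $t_c$ is the smallest positive singularity and that $\sqrt{\Delta}$ is real on $[0,t_c]$, so that $f$ is genuinely the probability generating function of $X$.
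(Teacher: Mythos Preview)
Your functional equation $(\dagger)$ and the derivation of $p_\bullet$ from it are correct and coincide with the paper's \eqref{eq:funda2}. In fact $(\dagger)$ is precisely Tutte's equation \eqref{eq:tuttesimple} specialised at $x=p_\circ$ (your $\phi$ equals $p_\circ\mathbf F(p_\circ,\cdot)$), so the algebraic setup is the right one.

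The genuine problem is your determination of $p_\circ$. Your key claim --- that criticality is characterised by the first zero of $\Delta$ being a \emph{double} zero --- is false: that tangency holds \emph{throughout} the subcritical regime, not only at criticality. Concretely, the kernel relations \eqref{eq:system} force both $\Delta(Y(p_\circ))=0$ and $\Delta'(Y(p_\circ))=0$ for every subcritical $\mu$, where $Y$ is the paper's change of variable (one checks directly that $\partial_y D(x,Y(x))=0$ using the three kernel equations). Your pair $\Delta=\Delta'=0$ therefore always recovers the identity $(s_0-2)G(s_0)=s_0(s_0-1)G'(s_0)$ at $s_0=Y(p_\circ)$, but this equation typically has several solutions (two in the Poisson family for every $\alpha<\alpha_c$), only one of which is $Y(p_\circ)$, and that one equals the $t_c$ of $(\star)$ only at the critical point. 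So your continuity heuristic --- ``subcritical $\Rightarrow\Delta>0$, critical $\Rightarrow$ first tangency'' --- is incorrect, and nothing in your argument singles out $p_\circ$. Note also that $(\star)$, which involves $G''$, never appears in your two conditions $\Delta=\Delta'=0$; this already signals that a third piece of information is missing.

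The paper avoids this by decoupling the problem. It studies the generating series $\mathbf F_0(x)$ of fully parked trees for a \emph{free} variable $x$, determines its radius of convergence $x_c$ via the kernel method and the change of variable $Y$ --- and it is there, as $\psi'(t_c)=0$, that the condition $(\star)$ enters --- and then characterises subcriticality as existence of a positive root of $1=\mu_0 x\,\mathbf F_0(x)^2$. Criticality is exactly the case where this root coincides with $x_c$, giving $p_\circ=x_c=\psi(t_c)$, which simplifies to $t_c^2/(4(t_c-1)G(t_c))$ once the equality in \eqref{eq:threshold} is used. What your route lacks is precisely this second, $p_\circ$-independent equation (or an equivalent substitute): the functional equation for $f$ together with its singularity structure alone does not determine $p_\circ$.
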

These calculations have a few surprising consequences:
\begin{itemize}
\item $ \mathbb{E}[X]<\infty$. The fact that the expectation of the flux of cars is finite in the whole subcritical regime (including at criticality) may be surprising at first, but this can actually be seen from the recursive distributional equation satisfied by $X$ by splitting at the root of $ \mathbb{B}$
 \begin{eqnarray} \label{eq:recursive} X & \overset{(d)}{=}& \quad  (X_{1}-1)_{+} + (X_{2}-1)_{+} + A,  \end{eqnarray} where $X_{1},X_{2}$ are two copies of law $X$ independent of the car arrivals $A$ of law $\mu$. Indeed, the RHS has expectation at least $  2\mathbb{E}[X]-2 $  which is strictly larger than $ \mathbb{E}[X]$ as soon as $\mathbb{E}[X] >2$. Iterating the argument, one sees that there is no a.s.~finite solution to the above recursive distributional equation which has a mean $>2$, see \cite[Theorem 1.1]{BBJ19} for details. Actually, as we already mentioned the variable $X$ must have a radius of convergence larger than $2$, even at the critical point, see the forthcoming Lemma \ref{lem:sub/super}. Also, plugging the value of $p_{\circ} = \mathbb{P}(X=0)$ into \eqref{eq:recursive} we  deduce that 
 $$  \mathbb{E}[X] + \mathbb{E}[A]= 2(1- p_{ \circ}),  \qquad \text{ or equivalently} \qquad \mathbb{E}[\mathrm{F}] + \mathbb{E}[A]= 1-p_{ \circ} $$
Now, by Remark \ref{ref:p0}, on the subcritical regime we have $p_{\circ} > \frac{1}{2}$, so that the LHS of the left identity is bounded by 1 and the LHS of the right identity by $1/2$; one can also show that these bounds are sharp.
\item  It will follow from our combinatorial decomposition that the clusters of void vertices are actually Bienaym\'e--Galton--Watson trees with offspring distribution $\xi$ given by 
 $$   \mathbb{P}( \xi=0) = \frac{p_{\bullet}^{2}}{ (p_{\circ} + p_\bullet)^2}, \quad \mathbb{P}( \xi=1) = \frac{2p_{\bullet}p _{\circ}}{ (p_{\circ} + p_\bullet)^2}, \quad \mathbb{P}( \xi=2)=\frac{p_{\circ}^{2}}{ (p_{\circ} + p_\bullet)^2} .$$
Again, since by Remark \ref{ref:p0} we have $p_{\circ} > \frac{1}{2}$, those trees are \textbf{supercritical}, implying that at criticality there are (infinitely many) infinite white clusters. On the contrary, we shall see in Proposition \ref{prop:noblack} that in the subcritical regime (including the critical case), there are no infinite black clusters.
 \end{itemize}
 Those phenomena underline the fact that the phase transition in the parking process is \emph{discontinuous} contrary e.g.~to the case of Bernoulli percolation on $ \mathbb{Z}^{2}$.
 
 \paragraph{Fully parked trees and their enumeration.} The proofs of our main results rely on a simple combinatorial decomposition into clusters of parked vertices and the enumeration thereof. More precisely, a fully parked tree $ \mathbf{f}$ is a subtree of $ \mathbb{B}$ containing the root, decorated with car arrivals, so that all those cars manage to park on $ \mathbf{f}$ and that reciprocally all vertices of $ \mathbf{f}$ are parked. If $F\equiv F_{\mu}(x)$ is the generating function of fully parked trees counted with a weight $x$ per vertex and incorporating the $\mu$-weight of car arrivals, see Section \ref{def-fpt} for the precise definition, then the high-level idea of the proof is to write the fixed point equations for $p_{\circ}$ and $p_{\bullet}$, which are
   \begin{eqnarray*} 
   p_{\circ} = \mu_{0}( p_{\circ} + p_{\bullet})^{2}\quad \mbox{ and } \quad 
   p_{\bullet} = p_{\circ}F_{\mu}( p_{\circ}),
    \end{eqnarray*}
and translate the idea that we decompose the structure into the (finite) clusters of parked vertices.  Theorem \ref{thm:phase} boils down to deciding whether we have a non trivial solution $p_{\circ} \ne 0$ to these equations (otherwise we are in the supercritical regime). The critical regime corresponds to the case where $p_{\circ}$ is exactly the radius of convergence of $F$. Thus the main ingredient in the proof is the ``computation'' of the generating function $F$. The enumeration of fully parked trees has already been considered in the combinatorics literature \cite{LaP16,chen2021enumeration,king2019prime,ConCurParking,contat2022last} and it shares many similarities with the enumeration of planar maps.  The idea is to enumerate a more complicated structure, namely fully parked trees with a possible flux of cars at the origin. Those are defined as fully parked trees, except that now the number of cars may be larger than the number of vertices of the tree so that the number of cars $X$ visiting the root of the tree may be strictly larger than $1$.  If $F \equiv F_{\mu}(x,y)$ is the generating function of fully parked trees with weight $x$ per vertex and $y$ per outgoing car, then writing a recursive decomposition at the root vertex we obtain
 \begin{eqnarray} \label{eq:tutteintro}
    F(x,y) =  \frac{x}{y}\left( \big(1+F(x,y)\big)^2 G(y)     - \big(1+F(x,0)\big)^2 G(0) \right).   \end{eqnarray}

  These equations are reminiscent of Tutte's equation \cite{Tut62} in the realm of planar maps where the perimeter of the external face plays the role of our outgoing flux of cars. In this equation, the variable $y$ is called the \emph{catalytic variable} since its role is to disappear to recover $F(x,0)  \equiv F(x)$,  the generating function of fully parked trees with no flux. We apply the standard kernel method \cite{BMJ06} to solve those equations, see Section \ref{sec:enumeration} for details. \\

Once we have sufficient information on $F$, the proofs of our main results are rather straightforward. Deciding whether $p_{\circ}$ is non trivial boils down to an inequality on $F(x_{c},0)$ at its radius of convergence $x_{c}$, see Proposition \ref{prop:criti}. Under the assumption $(\star)$, this inequality is equivalent to \eqref{eq:threshold} and the critical case corresponds to the case when $p_{\circ}$ coincides with the radius of convergence of $F(\cdot,0)$. Furthermore, in the subcritical case the generating function of the outgoing flux of cars is given by $ p_{\circ}F(p_{\circ},y)$  (see \eqref{eq:funda1}).

\paragraph{Growth-fragmentation trees.} It will follow from our decomposition that conditionally on $X=1$, i.e.~on $\varnothing$ being the  root of a fully parked tree with no flux, then the cluster of parked cars above $\varnothing$ is a random fully parked tree whose size has generating function $ F(p_{\circ} z)/F(p_{\circ})$. In the critical regime, since $p_{\circ}$ corresponds to the radius of convergence of $F$,  the tail of the cluster size has a subexponential decay and in the generic situation (e.g.~when the car arrivals have bounded support), we actually have 
  \begin{eqnarray} \mathbb{P}( \varnothing \mbox{ is a the root of a parked cluster of size } n) \sim \mathrm{cst} \cdot n^{-5/2},   \label{eq:5/2}\end{eqnarray}  the exponent $5/2$ being common in the theory of map enumeration. Furthermore, we also believe that in the generic situation, rescaled large fully parked trees converge after normalization towards the growth-fragmentation trees that already appeared in the study of scaling limits of random planar maps and the Brownian sphere, see  \cite{BBCK18,BCK18,le2020growth} or \cite[Chapter 14.3.2]{CurStFlour}. We already made a similar conjecture for the scaling limits of parked components in the  parking process on large uniform Cayley trees \cite[Conjecture 1]{ConCurParking}. It is interesting to notice that although the phase transition in the parking on $ \mathbb{B}$ is of a different flavor (the phase transition in the case of critical Galton--Watson trees is ``continuous''), the large scale geometry of the critical components should be the same. However, there are non-generic situations (with specific car arrivals distributions having heavy-tail) where  \eqref{eq:5/2} does not hold and where we expect different scaling limits. See Section \ref{sec:comments} and \cite{chen2021enumeration} for a similar phenomenon in the case of enumeration of non-binary plane fully parked trees. We plan to address those questions in following works.\medskip 

\noindent  \textbf{Acknowledgments.} A.C and N.C. acknowledge the support from ERC 740943 GeoBrown. Part of this work was initiated during a conference in CIRM and we thank our host for its hospitality.

\section{Background}
In this section we formally present the parking process on $ \mathbb{B}$ and gather a few ``rough'' probabilistic results (mostly adapted from \cite{GP19,BBJ19}). 
\subsection{Parking on infinite trees}
\label{sec:parkingpropre}
Let $\tau$ be a rooted locally finite (plane) tree decorated with car arrivals $(a_u : u \in \tau)$. As described in the introduction, cars try to park on their arrival node, and if the spot is taken they travel downwards in search of the first empty spot and, in case there is no such spot, exit at the root. In the case $\tau$ is finite, an easy Abelian property shows that the number of cars visiting each vertex of the tree does not depend on the order in which we park the cars, see Section 2.1 of \cite{LaP16}.

On infinite trees, to prevent cumbersome issues, 
%
 we shall stick to a given parking procedure: \emph{park the lowest cars first}. More precisely, for each $ n \geq 0$ let us consider the finite tree $[\tau]_{n}$ made of the first $n$ generations above the root $\varnothing$ (recall that $\tau$ is supposed locally finite) together with the restriction of the car arrivals on these vertices. We can then perform the parking on $ [\tau]_{n}$ and construct variables 
 $$ x_{n}(u) \quad u \in [\tau]_{n},$$
 representing the number of cars that visited the vertices of $[\tau]_{n}$ in the parking process (recall that those variables do not depend on the order in which we parked the cars on $[\tau]_{n}$). Notice that for a given vertex $u \in \tau$, the function $ n \mapsto x_{n}( u)$ is non-decreasing (it is defined for $n$ larger than the height of $u$) so that we can let $n \to \infty$ and define 
 $$ x(u) = \lim_{n \to \infty} x_{n}(u),$$
 as the limiting number of cars visiting $u$ in the parking process on $\tau$. This morally corresponds to parking the lowest cars first\footnote{In fact, we could equivalently fix an exhaustion of $\tau$ by finite trees $\tau_1 \subset \tau_2 \subset \cdots$ and define the parking on $\tau$ as the limit of the parking procedure over the $\tau_n$'s.}. In particular we say that $u$ is void if $ x(u)= 0$, that $u$ is occupied if $ x(u) \geq 1$ and the flux of outgoing cars at $u$ is $f(u) = (x(u)-1)_{+}$.

  \subsection{Rough phase transition}
 We now focus on the case of the binary tree $ \mathbb{B}$ with i.i.d.\ car arrivals $(A_u : u \in \mathbb{B})$ with law $\mu$ satisfying $ \mu(\{0,1\})<1$. We denote by $X(u)$ the number of cars that visited vertex $u \in \mathbb{B}$ as defined in the preceding section  and will use the shorthand notation $X= X(\varnothing)$. We first establish a dichotomy on $X$ in the next lemma, which we then interpret in more geometric terms by proving that there cannot be infinite black clusters with a finite flux. 

\begin{lemma}[Dichotomy subcritical/supercritical] \label{lem:sub/super}\label{phase-transition} We have the following dichotomy:\\
 \textbf{Subcritical case.}  Either the sequence $(2^n \P(X>n) : n \geq 0)$ is bounded.\\
 \textbf{Supercritical case.} Or $X=\infty$ a.s, in which case all vertices are parked a.s.
\end{lemma}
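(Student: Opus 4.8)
The plan is to combine a zero--one law for $\P(X=\infty)$ with an analysis of the generating function of $X$ coming from the recursion obtained by splitting $\mathbb{B}$ at its root. Writing $X(0),X(1)$ for the numbers of cars visiting the two children of $\varnothing$, the fact that cars only travel towards the root gives the almost sure identity
\begin{equation*}
 X = (X(0)-1)_+ + (X(1)-1)_+ + A_\varnothing ,
\end{equation*}
where $X(0),X(1)$ are independent copies of $X$, independent of $A_\varnothing$ of law $\mu$. Since $A_\varnothing<\infty$ almost surely, we have $\{X=\infty\}=\{X(0)=\infty\}\cup\{X(1)=\infty\}$, so $p_\infty := \P(X=\infty)$ satisfies $p_\infty = 1-(1-p_\infty)^2$, whence $p_\infty\in\{0,1\}$. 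If $p_\infty=1$, then each $X(u)$, being distributed as $X$, equals $\infty$ almost surely; a union bound over the countably many $u\in\mathbb{B}$ shows that almost surely every vertex is visited, hence occupied. This settles the supercritical alternative, and it remains to treat the case $X<\infty$ almost surely and establish the tail bound.

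Set $g(s)=\E[s^X]\in[1,\infty]$ for $s\ge 1$ and $p_0=\P(X=0)$. Taking generating functions in the recursion gives $g=h^2G$ with $h(s)=\E[s^{(X-1)_+}]=\big(p_0(s-1)+g(s)\big)/s$, that is,
\begin{equation*}
 G(s)\,g(s)^2-\big(s^2-2p_0(s-1)G(s)\big)\,g(s)+p_0^2(s-1)^2G(s)=0 .
\end{equation*}
So $g(s)$ is a root of this quadratic, and its discriminant, which equals $s^2\big(s^2-4p_0(s-1)G(s)\big)$, must be nonnegative wherever $g$ is finite. A preliminary point is that $p_0\in(0,1)$: we have $p_0<1$ since $\mu\ne\delta_0$ forces $\P(X\ge 1)\ge\P(A_\varnothing\ge1)>0$, while $p_0>0$ follows from a short analysis of the essential infimum $m$ of $X$. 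Indeed the recursion forces $m=2(m-1)_+ + \min\{k:\mu_k>0\}$, and a case check excludes $m\ge1$ (the value $m=1$ needs $\mu_0=0$, hence $\E[A]>1$ and thus $X=\infty$ almost surely since the cars arriving in the first $n$ levels exceed the number of spots by $\asymp 2^{n}(\E[A]-1)\to\infty$; the value $m=2$ forces $p_2=p_2^2\mu_0$, impossible), so $m=0$.

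With $p_0>0$ the heart of the proof is a singularity analysis. Let $S\in[1,\infty]$ be the radius of convergence of $g$. Because $\mu$ charges $\{2,3,\dots\}$, the ratio $(s-1)G(s)/s^2$ tends to $+\infty$, so the constraint $4p_0(s-1)G(s)\le s^2$ fails for large $s$; hence the discriminant has a first zero $s_0\in(1,\infty)$, with the discriminant positive on $[1,s_0)$. One checks that $G$ stays finite on $[1,s_0]$, that $g$ is the branch pinned by $g(1)=1$ and is analytic on $[1,s_0)$, and therefore that $S=s_0$ is a square-root branch point of $g$. Evaluating the quadratic at the double root and using $s_0^2=4p_0(s_0-1)G(s_0)$ gives the finite value
\begin{equation*}
 g(S)=\frac{S^2}{4G(S)}=(S-1)\,p_0<\infty .
\end{equation*}
Since $g(S)=\E[S^X]>p_0$ strictly (because $\P(X\ge1)>0$ and $S>1$), we get $(S-1)p_0>p_0$ with $p_0>0$, hence $S>2$. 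In particular $\E[2^X]\le\E[S^X]<\infty$, and Markov's inequality yields $2^n\P(X>n)=2^n\,\P(2^X\ge 2^{n+1})\le\E[2^X]/2$, which is bounded.

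The main obstacle is the singularity analysis: one must justify that the radius of convergence $S$ is exactly the first zero of the discriminant and that $g$ remains finite there (a branch point rather than a pole), since this is precisely what upgrades mere almost sure finiteness of $X$ into an exponential tail of rate $\log 2$. Establishing $p_0>0$ is a second, more elementary but still necessary, ingredient. It is worth noting that the cruder ``maximum over a level'' heuristic---the $2^n$ independent copies $X(u)$ with $|u|=n$ force $X=\infty$ once the radius of $X$ drops below $2$---only yields $S\ge2$ and is inconclusive at the boundary, whereas the branch-point computation delivers the strict inequality $S>2$ that makes the sequence $(2^n\P(X>n))_{n\ge0}$ genuinely bounded.
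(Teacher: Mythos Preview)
Your approach via the recursive distributional equation and a singularity analysis of $g(s)=\E[s^X]$ is genuinely different from the paper's, and when it goes through it actually delivers more than the lemma asks for: the strict inequality $S>2$ on the radius of convergence, not merely the boundedness of $(2^n\P(X>n))$. The zero--one law and the $p_0>0$ argument are fine.

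There is, however, a real gap. The claim that ``$(s-1)G(s)/s^2$ tends to $+\infty$'' — and hence that the discriminant $D(s)=s^2-4p_0(s-1)G(s)$ has a first zero $s_0\in(1,\infty)$ — fails precisely in the situation where $G$ has a finite radius of convergence $\rho$ with $G(\rho)<\infty$ (the ``dense'' case of Section~\ref{sec:Y(x)}). There $D$ can remain positive on all of $[0,\rho]$, so your branch-point identity $g(S)=(S-1)p_0$ is unavailable; one can still argue $S=\rho$ by solving the quadratic for $G$ in terms of $g$, but nothing in your computation then forces $\rho>2$. Closing this case seems to require exactly the kind of level-maximum comparison you set aside.

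On that comparison, your final paragraph underestimates the paper's argument. The paper does \emph{not} prove ``$S<2\Rightarrow X=\infty$'' and get stuck at the boundary; it proves the contrapositive of the boundedness statement directly. Assuming $(2^n\P(X>n))_n$ is unbounded, so is $(2^n\P(X>n+k))_n$ for each fixed $k$; hence along a subsequence
\[
\P\Big(\max_{|u|=n} X(u)\le n+k\Big)\le \exp\big(-2^n\P(X>n+k)\big)\to 0,
\]
and on the complement one vertex $u$ at height $n$ has $X(u)>n+k$, which forces at least $k$ of those cars to reach $\varnothing$. Since $k$ is arbitrary, $X=\infty$ a.s. This yields the lemma in one stroke, with no generating-function analysis and no case distinction on $G$.
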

 

\begin{proof}[Proof of the lemma.] Assume that $(2^n \P(X>n) : n \geq 0)$ is not bounded, and observe that the same is then true of the sequence $(2^n \P(X>n+k) : n \geq 0)$ for any integer $k$.
Then consider the collection of the $2^n$ i.i.d.~variables  $X({u})$  attached to the vertices $u$ of $\mathbb{B}$ at height $n$. We have the upper bound
$$ \P\Big(\bigcap_{u : |u| =n } \{X(u)\le n+k\}\Big) \le   \mathrm{e}^{- 2^n \P(X>n+k))},$$
with the right-hand side going to $0$ along a subsequence. But on the complement of the event on the left-hand side, one of the variables $X(u)$ is strictly larger  $n+k$ and this contribution only suffices to imply that $X= X( \varnothing) \geq k$.  Combined with our assumption, this implies that, almost surely, $X \geq k$, hence, $k$ being arbitrary, $X=\infty$. This means that the root of $ \mathbb{B}$ almost surely contains a car, and it is the same for any other vertex. \end{proof}

The next lemma says that the above dichotomy is equivalent to the existence of infinite black clusters. In particular, it rules out the possibility of having an infinite black cluster and a finite flux.
\begin{proposition} \label{prop:noblack} In the subcritical regime, there is no infinite black cluster.
\end{proposition}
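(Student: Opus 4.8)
The plan is to prove the equivalent statement that the occupied (black) cluster of the root $\varnothing$ is finite almost surely, and then to remove the special role of the root by homogeneity. First I would record the key structural fact, already implicit in Section~\ref{sec:parkingpropre}: for any vertex $\rho$, the whole parking configuration inside the subtree $\mathbb{B}_\rho$ hanging at $\rho$ is a function of the arrivals $(A_u : u \in \mathbb{B}_\rho)$ alone, since cars only travel towards the root and thus never enter $\mathbb{B}_\rho$ from outside. Consequently the event $\{\rho \text{ is occupied and carries an infinite occupied cluster above it in } \mathbb{B}_\rho\}$ has a probability $a_\infty$ not depending on $\rho$. Since every infinite black cluster possesses a lowest vertex, which is such a $\rho$, a union bound over the countably many vertices reduces the proposition to showing $a_\infty = 0$, i.e.\ that the occupied cluster of $\varnothing$ is a.s.\ finite; equivalently, by K\"onig's lemma, that there is a.s.\ no infinite occupied ray emanating from $\varnothing$.

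Next I would introduce the outgoing flux $Y_u := (X(u)-1)_+$, which satisfies $X(u) = A_u + Y_{u0} + Y_{u1}$ and whose law does not depend on $u$. In the subcritical regime Lemma~\ref{lem:sub/super} gives $\mathbb{E}[X] < \infty$, so taking expectations in the recursive distributional equation \eqref{eq:recursive} yields $\mathbb{E}[A] + \mathbb{E}[Y] = 1 - p_\circ$; hence the i.i.d.\ increments $1 - A - Y$ have the \emph{positive} mean $p_\circ = \mathbb{P}(X = 0)$, which moreover exceeds $\tfrac12$ in the subcritical regime. Fix a path $\varnothing = v_0, v_1, \dots, v_n$ and let $w_i$ denote the off-path child of $v_i$. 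On the event that the whole path is occupied the positive parts in the recursion disappear, giving $Y_{v_i} = A_{v_i} + Y_{v_{i+1}} + Y_{w_i} - 1$, and telescoping forces $Y_{v_n} \ge R$, where $R := \max_{0 \le i \le n} \sum_{j=i}^{n-1}(1 - A_{v_j} - Y_{w_j})$ is the maximum of a random walk with positive drift $p_\circ$, independent of the top flux $Y_{v_n}$.

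The crude consequence $\mathbb{P}(v_0,\dots,v_n \text{ occupied}) \le \mathbb{P}(Y_{v_n} \ge R)$ shows that each individual ray is occupied with exponentially small probability, since $R$ concentrates around $p_\circ n$ and $Y$ has an exponential tail. The main obstacle, however, is that the $2^n$ rays share their lower portions and are strongly positively correlated through the flux, so the resulting first-moment bound on the number $Z_n$ of level-$n$ vertices joined to $\varnothing$ by an occupied path is too lossy: just as for the expected width of a critical Galton--Watson tree, $\mathbb{E}[Z_n]$ need not tend to $0$ even though $\mathbb{P}(Z_n \ge 1)$ does. To capture the correlations I would instead analyse the exploration of the occupied cluster as a branching process whose type is the local pair of outgoing fluxes $(Y_{u0}, Y_{u1})$: conditioning on these fluxes at each occupied vertex turns the survival probabilities $a_n = \mathbb{P}(\text{the cluster of } \varnothing \text{ reaches level } n)$ into the iterates of a smoothing transform whose criticality coincides with the parking phase transition.

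Finally I would show that throughout the subcritical regime, including criticality, this transform has only the trivial fixed point, so that $a_n \downarrow a_\infty = 0$. I expect the genuinely delicate point to be the critical case, where extinction is only marginal and must be extracted from the strict inequality $p_\circ > \tfrac12$ (equivalently from the finiteness of $\mathbb{E}[X]$ furnished by Lemma~\ref{lem:sub/super}) rather than from any crude exponential estimate. This is the same borderline phenomenon that, on the enumerative side, reappears as the convergence of the fully parked tree generating function $F$ at its radius of convergence, and I would expect the two analyses to mirror each other.
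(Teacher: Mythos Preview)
Your setup is correct and the telescoping identity along a ray is a nice observation, but the argument stops exactly where the work begins. After noting that the first-moment bound on $Z_n$ is too weak, you propose to ``analyse the exploration of the occupied cluster as a branching process whose type is the local pair of outgoing fluxes'' and to show that the associated smoothing transform has only the trivial fixed point. None of this is carried out: you neither write down the transform, nor identify its fixed-point equation, nor explain why subcriticality of parking forces extinction of that branching process. The final paragraph is a hope, not an argument; in particular, the appeal to $p_\circ>\tfrac12$ is circular in this paper, since that inequality (Remark~\ref{ref:p0}) is derived from the critical computations of Theorem~\ref{thm:critical}, which in turn use the decomposition~\eqref{eq:funda1} whose very first step invokes Proposition~\ref{prop:noblack}.

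The paper's proof avoids all of this by a much more elementary idea that you are missing. Instead of tracking fluxes along a ray and fighting the correlations between rays, it \emph{conditions on the value of the flux at the root}: fix $p\geq 0$ and work on $\mathcal{E}=\{X(\varnothing)=p,\ \mathcal{C}(\varnothing)\text{ infinite}\}$. One then explores the parking level by level and defines stopping times $\theta_1<\theta_2<\cdots$ at which the partially parked cluster of $\varnothing$ reaches a new height; at time $\theta_n$ pick a vertex $v_n$ on the boundary connected to $\varnothing$ by occupied vertices. The key observation is that on $\mathcal{E}$ the two children of $v_n$ must both have arrivals in $\{0,1\}$: if either had $A\geq 2$, the extra car would travel down the already occupied path to $\varnothing$ and push $X(\varnothing)$ above $p$, contradicting $X(\varnothing)=p$. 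These events at successive $\theta_n$ are conditionally independent given the past, each with probability $(\mu_0+\mu_1)^2<1$, so $\mathbb{P}(\mathcal{E})\leq (\mu_0+\mu_1)^{2(n-1)}\to 0$. This uses nothing beyond the standing assumption $\mu(\{0,1\})<1$ and the a.s.\ finiteness of $X$ in the subcritical regime, and in particular needs no quantitative information about $p_\circ$.
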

\begin{proof} Suppose that $\mu$ is subcritical, so that all variables $X(u)$ are finite after the parking process. It suffices to prove that the probability that the cluster of the origin $  \mathcal{C}(\varnothing)$ is infinite is $0$. 
Fix $p \geq0$ and let us consider the event $ \mathcal{E} = \{ X( \varnothing) =p \mbox{ and }  \mathcal{C}( \varnothing) \mbox{ is infinite}\}$. We shall explore the process by parking on the first $n$ levels of $ \mathbb{B}$ as in the preceding section. More precisely, let $ \mathcal{F}_{n}$ be the sigma field generated by the variables $X_{n}(u)$ for $X_{n}(u)$ the number of cars visiting the vertex $u$ when restricting the parking on $[ \mathbb{B}]_{n}$. We then construct a sequence of stopping times $\theta_{1} < \theta_{2}  <  \cdots$ obtained as follows: $\theta_{1} = \inf\{n \geq 0 : X_{n}( \varnothing) =p \}$ and then by induction $\theta_{i+1} = \inf\{ n > \theta_{i} : \varnothing \leftrightarrow \partial [\mathbb{B}]_{n}\}$ where $\varnothing \leftrightarrow \partial [\mathbb{B}]_{n}$ means that $ \varnothing$ is connected to the level $n$ by a path whose vertices satisfy $ X_{n}(u) \geq 1$. A moment's though shows that  on the event $ \mathcal{E}$ all these stopping times are finite for otherwise the black cluster of the origin would not be infinite. For $n \geq 2$, on the event $\{\theta_n <\infty\}$, let $v_n$ be the (first, for definiteness) vertex of $\partial [\mathbb{B}]_{\theta_n}$ to be connected to the root when parking on $[\mathbb{B}]_{\theta_n}$.  Set $\mathcal{D}_n = \{  A_{v_n 0},A_{v_n 1} \in \{0,1\}\}$ the event that the two children of $v_n$ have car arrivals $\leq 1$. Plainly, $\mathcal{E} \subset  \mathcal{D}_n \cap \{\theta_n <\infty\}$ since 
otherwise, the flux coming from these two vertices would go down all the way to $ \varnothing$ and we would have $ X( \varnothing) > p$. In particular we have 
$$ \mathbb{P}( \mathcal{D}_n  \mathbf{1}_{\theta_n < \infty}\mid \mathcal{F}_{\theta_n}) = (\mu_0+ \mu_1)^{2}<1.$$  Notice then that $ \mathcal{D}_{1}, ... , \mathcal{D}_{n-1}$ are $ \mathcal{F}_{\theta_n}$-measurable so that by induction we have 
$$\P(\mathcal{E}) \leq \P(\theta_n<\infty)  \prod_{k=2}^n \P(\mathcal{D}_k |\theta_n <\infty ) \leq (\mu_0+ \mu_1)^{2(n-1)}, $$  which implies $\P(\mathcal{E})=0$ since $n$ is arbitrary and we assumed the distribution $\mu$  satisfies $\mu_0+ \mu_1<1$. \end{proof}

As a consequence of the (proof of)  Lemma \ref{lem:sub/super}, there is no lower bound for $\E[A]$ for supercritical parking, since one may cook up distributions $\mu$ with arbitrarily small expectation but $\E[2^A]=\infty$.
However, if the car arrival distribution is bounded, one can obtain  a lower bound for the expectation $\E[A]$ of the car arrival distribution for supercritical parking using a first moment method, see  \cite[Proposition 3.5]{GP19} and \cite{BBJ19} for details.
%

To speak of a phase transition,  one may imagine a family $(\mu^{\alpha})_{\alpha \geq 0}$ of car arrival distributions that is stochastically increasing in the mean $\alpha$. In this case, the subcritical phase is  identified with a \emph{closed set} $ \alpha \in [0,\alpha_c]$, and the supercritical phase with the set $]\alpha_c,1]$. The fact that $\alpha_{c}$ is actually subcritical (i.e.~satisfies the first alternative of the dichotomy) can be seen by monotone convergence since the expectation of the flux is bounded above by $2$ in the whole subcritical phase as recalled in the introduction (see \cite{BBJ19} for details).

\section{Decomposition into fully-parked components}
In this section we present our combinatorial decomposition which underlies our main results. The idea is very simple: we decompose the final configuration on $ \mathbb{B}$ into the black clusters of parked vertices and the white empty vertices. This shows that we can decompose the final configuration as a two-type Bienaym\'e--Galton--Watson tree whose offspring distribution is related to the generating function $F$ of fully parked trees studied in detail in the next section.
\subsection{Fully parked trees}
\label{def-fpt}
Suppose that we performed the parking process on $ \mathbb{B}$, and recall that the black vertices are those $u \in \mathbb{B}$ satisfying $ X(u) \geq 1$, the other ones being the empty or white vertices. The \emph{finite} black connected components are \emph{fully parked trees} $\tr$, i.e.\ connected subsets of the binary tree decorated by car arrivals $(a_u)_{u \in \tr}$ such that after parking all vertices are occupied. If such a tree appears as the black component of the root $ \varnothing$, then the fully parked tree may have an outgoing flux at the root (i.e.~containing more cars than vertices), otherwise it contains as many cars as vertices. See Figure \ref{fig:FPT}.

\begin{figure}[!h]
 \begin{center}
 \includegraphics[width=12cm]{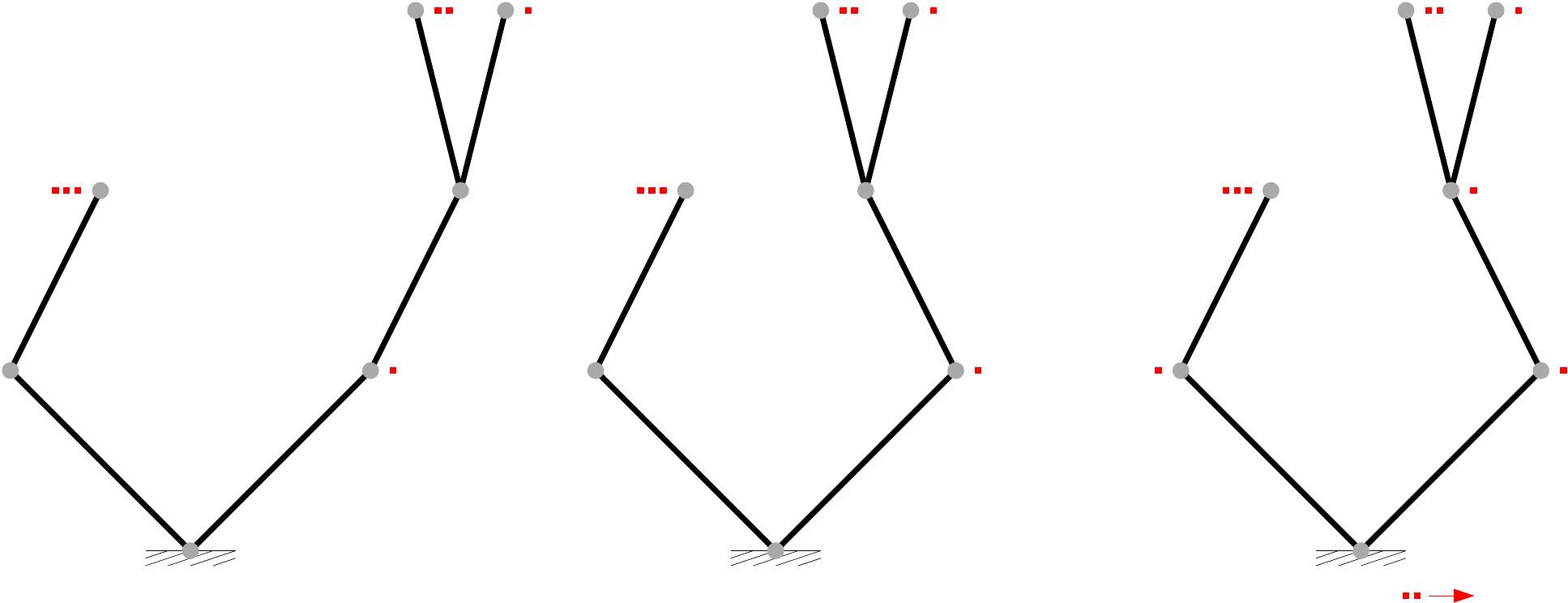}
 \caption{Three examples of fully parked trees. Notice that the first two have no outgoing flux and represent the same plane tree but their embeddings in $ \mathbb{B}$ is different. The last fully parked tree on the right has an outgoing flux of $2$ cars. \label{fig:FPT}}
 \end{center}
 \end{figure}
 
 For the enumeration of the fully parked trees we shall always consider that their bottom vertex is $\varnothing$. Each plane rooted structure of a fully parked tree with $m$ vertices with $1$ child actually corresponds to $2^m$ different embeddings as a subset of $  \mathbb{B}$ (with $ \varnothing$ as the root): for this reason, later in the decomposition we shall put a weight of $2$ for vertices with outdegree $1$. Let us denote by $ \mathbb{T}_n^{(p)}$ the set of all fully parked trees with root $\varnothing$, with $n$ vertices and having outgoing flux $p \geq 0$ (i.e.~$p+1$ cars have visited the root vertex). The weight $w({\tr})$ of a fully parked tree $ \tr \equiv (\tr : (a_u : u \in \tr))$ is the weight of its car decoration, that is 
 $$w({\tr})= \prod_{u \in \tr} \mu_{a_u}.$$
 We can then form the bivariate generating series of fully parked trees (with flux) as 
 \begin{eqnarray*} F(x,y)  \equiv F_{\mu}(x,y) &:=& \sum_{n \geq 1}\sum_{p \geq 0} \sum_{\tr \in \mathbb{T}_n^{(p)}} x^n y^p w({\tr})\\ &=&  x  \big(\mu_1+ \mu_2 y + \mu_3 y^2 + \cdots\big) + x^2 \big(2(\mu_2+ \mu_1^2) +  2y(\mu_3+ 2\mu_1\mu_2) + \cdots \big)  + \cdots   \end{eqnarray*}
 Section \ref{sec:enumeration} is devoted to the study of $F$ via a functional equation obtained by splitting a fully parked tree at the root, see \eqref{eq:tuttesimple}. But before doing so, let us present the combinatorial decomposition and the characterization of subcriticality in terms of $F$. It turns out that most equations simplify if one introduces
 $$ \mathbf{F}(x,y) := 1 + F(x,y) \quad \mbox{ and }\quad  \mathbf{F}_{0}(x) := \mathbf{F}(x,0) := 1+ F(x,0).$$

 \subsection{Decomposition} 
Recall that the law of $X$ is $(p_k : k \geq 0)$ and that we gave a short-hand notation $p_\circ =p_0$ for the probability that the root vertex is empty. We write $ \mathcal{C}( \varnothing)$ for the monochromatic cluster of the origin in $ \mathbb{B}$ after parking. Notice that the number of vertices adjacent from above to a fully parked tree $\tr \subset \mathbb{B}$ with $n$ vertices is $n+1$, regardless of the shape of $\tr$. Recalling Proposition \ref{prop:noblack} we have for $k \geq 0$
 \begin{eqnarray}  \mathbb{P}( X( \varnothing) = k+1) &\underset{ \mathrm{Prop.} \ref{prop:noblack}}{=} &\mathbb{P}( X( \varnothing) = k+1 \mbox{ and } \# \mathcal{C}( \varnothing) < \infty) \nonumber \\& =& \sum_{\tr \in \mathbb{T}_n^{(k)}} \mathbb{P}(\mathcal{C}( \varnothing) = \tr)\nonumber \\&=&  \sum_{n \geq 1} \sum_{\tr \in \mathbb{T}_n^{(k)}} w( \tr) p_\circ^{n+1}   = p_\circ [y^k]F(p_\circ,y). \label{eq:funda1}\end{eqnarray}
The other fundamental equation is obtained by noticing that the event $\{X(\varnothing)=0\}$ occurs if and only if $A_\varnothing=0$ and $\{X(u) \in \{0,1\} : \mbox{ for } u \in \{0,1\}\}$ which turns into 
 \begin{eqnarray} p_\circ &=& \mu_0 ( p_\circ+ p_\bullet)^2.  \label{eq:funda2}\end{eqnarray}
Specializing  \eqref{eq:funda1} to $k=0$, we recover together with the previous display the fixed point equation $p_\bullet = p_\circ {F} (p_\circ,0)$, mentioned in the introduction.  In particular, re-injecting in  \eqref{eq:funda2} we obtain
  \begin{eqnarray*} \quad p_{\circ} = \mu_{0} p_{\circ}^{2}\big(  \mathbf{F}_{0}( p_{\circ})\big)^{2}, \quad \mbox{ where we recall that } \mathbf{F}_{0}(x) = 1+F(x,0).   \label{eq:funda3}
  \end{eqnarray*}
Notice that the function $ x \mapsto \mu_{0} x^{2} (\mathbf{F}_{0}(x))^{2}$ is strictly convex and that $p_{\circ}=0$ is a trivial solution to the above equation, so there is \textbf{at most one positive solution} $p_{\circ}$. Under the same hypothesis, splitting according to the values of $X(\varnothing)$ we also obtain thanks to \eqref{eq:funda1}
 \begin{eqnarray} 1 &=& p_{\circ} + p_{1}+ p_{2}+ \cdots \underset{\eqref{eq:funda1}}{=} p_{\circ} \mathbf{F}(p_{\circ},1). \label{eq:funda4}  \end{eqnarray}

 \begin{proposition}[$F$-characterization of subcriticality] \label{prop:criti} \label{prop:decomp}
 The law $\mu$ is subcritical if and only if there is a positive solution to the 
 equation  \begin{eqnarray} \quad 1 = \mu_{0} x(  \mathbf{F}_{0}(x))^{2}. \label{eq:fundabis}
 \end{eqnarray}\end{proposition}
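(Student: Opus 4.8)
The plan is to prove the two implications separately: the forward one is essentially a repackaging of the identities \eqref{eq:funda1}--\eqref{eq:funda4} already at hand, while the reverse one requires an honest construction and is where the real work lies.

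\emph{Subcritical $\Rightarrow$ existence of a positive root.} Here I would simply assemble the established equations. In the subcritical regime $X<\infty$ almost surely, so $\sum_{k\ge0}p_k=1$. Summing \eqref{eq:funda1} over $k\ge0$ gives $\sum_{k\ge1}p_k=p_\circ F(p_\circ,1)$, whence $1=p_\circ(1+F(p_\circ,1))=p_\circ\mathbf{F}(p_\circ,1)$, i.e.\ \eqref{eq:funda4}. Since $\mathbf{F}(0,1)=1$, taking $p_\circ=0$ would force $1=0$; hence $p_\circ>0$ (and in particular $\mu_0>0$). Now \eqref{eq:funda3} reads $p_\circ=\mu_0p_\circ^2(\mathbf{F}_0(p_\circ))^2$, and dividing by $p_\circ>0$ exhibits $x=p_\circ$ as a positive solution of \eqref{eq:fundabis}.

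\emph{Existence of a positive root $\Rightarrow$ subcritical.} This is the substantial direction. Fix $x_*>0$ with $\mu_0x_*(\mathbf{F}_0(x_*))^2=1$; then $\mu_0>0$ and $\mathbf{F}_0(x_*)=(\mu_0x_*)^{-1/2}<\infty$, so $x_*$ lies in the domain of convergence of $\mathbf{F}(\cdot,0)$. My idea is to manufacture, out of $x_*$, an honest fixed point of the recursive distributional equation \eqref{eq:recursive} and then squeeze the truncated fluxes below it. Write $T$ for the map sending the law of a $\mathbb{Z}_{\ge0}$-valued variable $Y$ to the law of $(Y_1-1)_+ + (Y_2-1)_+ + A$ with $Y_1,Y_2,A$ independent. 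By Section \ref{sec:parkingpropre} the truncated fluxes satisfy $X^{(h)}:=T^h(\delta_0)\uparrow X$ (this is $x_n(u)\uparrow x(u)$), and $T$ is non-decreasing for the stochastic order, because $t\mapsto(t-1)_+$ is non-decreasing and independent sums preserve that order. Hence, once I exhibit a probability law $\pi$ with $T(\pi)=\pi$, the trivial bound $\delta_0\preceq\pi$ yields $X^{(h)}=T^h(\delta_0)\preceq T^h(\pi)=\pi$ for all $h$, so that $X\preceq\pi$ is finite almost surely and, by the dichotomy of Lemma \ref{lem:sub/super}, $\mu$ is subcritical.

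\emph{Construction and verification of $\pi$.} I would set $\pi_0=x_*$ and $\pi_{k+1}=x_*\,[y^k]F(x_*,y)$ for $k\ge0$, mimicking \eqref{eq:funda1}. For $Y\sim\pi$ one then computes $\E[s^{(Y-1)_+}]=x_*\mathbf{F}(x_*,s)$, while the generating function of $\pi$ is $x_*(1+sF(x_*,s))$. Evaluating Tutte's equation \eqref{eq:tutteintro} at $x=x_*$ and using $\mu_0x_*(\mathbf{F}_0(x_*))^2=1$ turns it into $sF(x_*,s)=x_*G(s)\,\mathbf{F}(x_*,s)^2-1$, equivalently $x_*^2G(s)\,\mathbf{F}(x_*,s)^2=x_*(1+sF(x_*,s))$. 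The left-hand side is exactly the generating function of $T(\pi)$, namely $(x_*\mathbf{F}(x_*,s))^2G(s)$, and the right-hand side is that of $\pi$; hence $T(\pi)=\pi$. The same relation, read as the quadratic $x_*G(s)\,\mathbf{F}^2-s\mathbf{F}+(s-1)=0$, pins down $\mathbf{F}(x_*,s)$ as its larger root $\frac{s+\sqrt{s^2+4x_*G(s)(1-s)}}{2x_*G(s)}$, finite on $[0,1]$; at $s=1$ this gives $\mathbf{F}(x_*,1)=1/x_*$, so $\sum_k\pi_k=x_*\mathbf{F}(x_*,1)=1$ and $\pi$ is a genuine probability distribution.

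\emph{Main obstacle.} The one delicate point is legitimizing the evaluation of \eqref{eq:tutteintro} — a priori only an identity of formal power series — at the specific point $(x_*,s)$ with $s\in[0,1]$. The hard part will be the finiteness of $\mathbf{F}(x_*,s)$, which I would settle by the standard bootstrap for non-negative series: $s\mapsto\mathbf{F}(x_*,s)$ is finite and satisfies the above quadratic on a maximal interval $[0,s^*)$, where continuity from $s=0$ (where it equals $\mathbf{F}_0(x_*)<\infty$) identifies it with the larger root; as that root stays bounded up to $s=1$, monotone convergence forces $s^*=1$ and $\mathbf{F}(x_*,1)<\infty$. Granting this, every displayed manipulation is between finite quantities and the argument closes; the remaining ingredients — monotonicity of $T$, the convergence $X^{(h)}\uparrow X$, and passing to the limit in $X\preceq\pi$ — are routine.
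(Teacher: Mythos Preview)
Your proposal is correct and follows essentially the same route as the paper. Both directions coincide in spirit: the forward implication reads off $x=p_\circ$ from the decomposition identities, and the reverse implication builds a probability law out of $x_*$ via the generating function $\mathbf{F}(x_*,\cdot)$, checks that it is a fixed point of the recursive distributional equation (using Tutte's equation together with $\mu_0 x_*\mathbf{F}_0(x_*)^2=1$), and then dominates the truncated fluxes by monotonicity. The only cosmetic difference is that you work with the RDE for $X$ (the map $T$ on \eqref{eq:recursive}) whereas the paper works with the equivalent RDE for the flux $Z=(X-1)_+$; the two are related by your own computation $\E[s^{(Y-1)_+}]=x_*\mathbf{F}(x_*,s)$, which is exactly the paper's generating function for $Z$.

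One small caution on your ``main obstacle'' paragraph: the sentence ``monotone convergence forces $s^*=1$'' is not quite complete as stated. Monotone convergence only yields $\mathbf{F}(x_*,s^*)=\lim_{s\uparrow s^*}M(s)<\infty$, which does not by itself push $s^*$ beyond the radius of convergence (a non-negative series can converge at its radius). What closes the argument is Pringsheim's theorem: since the explicit larger root is analytic in a complex neighborhood of any $s^*<1$ (the discriminant is strictly positive there and $G$ is analytic), the series would have an analytic continuation past $s^*$, contradicting Pringsheim. This is precisely what the paper invokes, and once you cite it your sketch is complete.
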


\begin{proof} Let $\mu$ be a subcritical law for the parking on $ \mathbb{B}$. Since $p_{\circ} \ne 0$, the above calculations show that $p_{\circ}$ is indeed a solution to the equation \eqref{eq:fundabis}. 

Conversely, suppose that there is a positive solution $x_{\circ}$ to  \eqref{eq:fundabis}. As a special case of equation \eqref{eq:tuttesimple} below for $ \mathbf{F}(x,y)$, we know that the series $  \mathfrak{f}(y) = \mathbf{F}(x_{\circ},y)$ is a solution to $ y +  \mathfrak{f}^2 x_{\circ} G(y) -  y \mathfrak{f}- 1 =0$.  Solving the quadratic equation and taking the combinatorial solution we have 
$$ \mathfrak{f}(y) = \frac{y + \sqrt{y^{2}+ 4x_{\circ}(1-y)G(y)}}{2x_{\circ} G(y)}.$$
At first, the above equality holds only as a formal power series in $y$. But notice that the function $y^{2}+ 4x_{\circ}(1-y)G(y)$ inside the square-root does not vanish over $y \in [0,1]$ so that the solution above is analytic over $[0,1]$. By Pringsheim's theorem  \cite[Theorem IV.6 p.240]{Flajolet:analytic}, the function $ \mathfrak{f}$ has radius of convergence at least $1$ and we have $ \mathfrak{f}(1) = \frac{1}{x_{\circ}}$ which is   $x_\circ \mathbf{F}(x_\circ,1)=1.$
%
This in turn ensures that there exists a random variable $Z$ (the outgoing flux of cars) whose generating function is 
$$  \mathbf{Z}(y) =  x_{\circ}  \mathbf{F}(x_{\circ},y).$$ 
We then compute, using Tutte's equation \eqref{eq:tutteintro} (see \eqref{eq:tuttesimple} below) as well as  \eqref{eq:fundabis}:
\begin{align*}
& \frac{1}{y} \big( \mathbf{Z}(y)^{2} G(y) -  \mathbf{Z}(0)^{2} G(0) \big) + \mathbf{Z}(0)^{2} G(0)  \\
=&   x_{\circ} \left(\frac{x_{\circ}}{y} \big(   \mathbf{F}(x_{\circ},y) ^2 G(y) -  \mathbf{F}(x_{\circ},0)^2 G(0) \big) \right)+  x_{\circ}^2  \mathbf{F}(x_{\circ},0)^2 G(0)\\
=& x_\circ F(x_\circ,y) +  x_{\circ}  
 = \mathbf{Z}(y),
 \end{align*}
but this identity is equivalent to the  following recursive distributional equation for $Z$:
$$Z \overset{(d)}{=} (Z_{1}+Z_{2} +A-1)_{+},$$ where on the right-hand side the variables are independent and $Z_{1},Z_{2}$ are two copies of law $Z$.

This recursive distributional equation enables us to decorate the vertices of $ \mathbb{B}$ by i.i.d.\ variables $A_{u}$ in such a way that for every $n$, the parking on $[ \mathbb{B}]_{n}$ together with i.i.d.\ fluxes on $\partial [ \mathbb{B}]_{n}$ yields a flux of law $Z$ at the root (in a coherent manner). Replacing  the i.i.d.\ fluxes on $\partial [ \mathbb{B}]_{n}$ by null fluxes on $\partial [ \mathbb{B}]_{n}$, and writing $X_n = X([ \mathbb{B}]_{n})$
for the number of cars visiting the root for the parking on $[ \mathbb{B}]_{n}$, 
we deduce by comparison that the flux at the root $\mathrm{F_n} = (X_n-1)_{+}$ is dominated by the a.s. finite random variable $Z$, which implies in particular that the parking on $ \mathbb{B}$ with law $\mu$ is subcritical. Et voil\`a.
\end{proof}

\section{Enumeration of fully parked trees} \label{sec:enumeration}
This section is the analytic core of the paper. We write the recursive equations (Tutte's equation) for fully parked trees and solve them using the kernel method of Bousquet-M\'elou \& Jehanne \cite{BMJ06}. Combined with Proposition \ref{prop:decomp} this enables us to prove our main results easily. The results are similar to the work of Chen \cite{chen2021enumeration} which considered plane fully-parked trees (as opposed to our binary case). Notice also that the technical part of \cite{chen2021enumeration} consists in obtaining asymptotics for the coefficients, a goal that we did not pursue in these pages.
\subsection{Solving Tutte's equation}

Recall that $F(x,y)$ is the bivariate generating function of the fully parked trees where $x$ encodes the number of vertices of the tree and $y$ the \textbf{flux} of cars and $G$ is the generating function of the car arrivals. To enumerate  fully parked trees, we decompose them at their root vertices. Take a fully parked tree with $n \geq 1$ vertices and $n+p$ cars in total (the flux of cars is $p$). Then
\begin{itemize}
\item either $n=1$ which means that the root vertex has no vertex above it. In this case, at least one car arrives on this vertex (since the root vertex should contain a parked car) and the number of cars arriving on this vertex is $1+p$. Summing over $p$  gives the term $ x (G(y)- G(0))/y$.
\item Another possibility is that the root vertex has a unique child in the fully parked tree, which can be the left or right neighbor in $ \mathbb{B}$. In that case, the subtree above this child is a fully parked tree with $n-1$ vertices and a flux of cars $p_1$ where $p_1+ \ell -1 =p$ if there are $ \ell$ cars arriving on the initial root vertex. Notice that the case $p_1 = \ell =0$ is excluded since otherwise the root vertex is not parked. Summing over $p$ yields the term $ 2x(F(x,y)G(y)- F(x,0)G(0))/y$. 
\item The last case is when the root vertex has two parked children, each carrying a fully parked tree above it with respective sizes $k \geq 1$ and $n-k-1 \geq 1$ and flux of cars $p_1$ and $p_2$. To obtain a flux of cars $p$ at the root, one must have $p_1+p_2+\ell - 1 = p$ where $ \ell$ is the number of cars arriving at the root vertex. Again the case $p_1=p_2= \ell=0$ is excluded.  We thus obtain a term $x (F(x,y)^2 G(y) - F(x,0)^2G(0))/y.$
\end{itemize}

\begin{figure}[!h]
 \begin{center}
 \includegraphics[width=15cm]{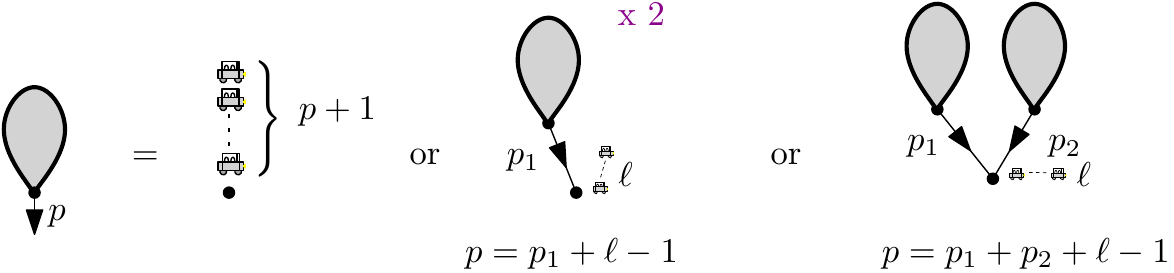}
 \caption{\label{fig:tutte} Illustration of Tutte's recursive decomposition at the root vertex. On the left a fully parked tree with flux $p$. If $n=1$, then the tree is just a vertex with $p+1$ cars arriving on it. Otherwise, it has one or two children which are the root of smaller fully parked tree. }
 \end{center}
 \end{figure}
Summing these three terms, we obtain the following recursive equation for $F$:
\begin{equation} \label{eq:Tutte} yF(x,y) = x(G(y)-G(0)) + 2x(F(x,y)G(y)- F(x,0)G(0))+ x (F(x,y)^2 G(y) - F(x,0)^2G(0))\end{equation}
 With our notation $ \mathbf{F} = F+1$ and $ \mathbf{F}_0 (x) = \mathbf{F}(x, 0) = F(x,0)+1$, this equation simplifies to 
  \begin{eqnarray} P( \mathbf{F}(x,y), \mathbf{F}_0(x),x,y)=0 \quad \mbox{ where } \quad  P(f,f_0,x,y) = y + f^2 x G(y) - f y - f_0^2 x G(0).  \label{eq:tuttesimple}  \end{eqnarray}

To solve this equation, we apply the kernel method of Bousquet-M\'elou and Jehanne \cite{BMJ06} and look for a (formal) power series $ Y=Y(x)$ such that $\partial_{f} P( \mathbf{F}(x, Y(x)), \mathbf{F}_0(x),x,Y(x))=0$ so that combined with \eqref{eq:tuttesimple} we also find automatically $\partial_{y} P( \mathbf{F}(x, Y(x)), \mathbf{F}_0(x),x,Y(x))=0$. 
This introduction may seem ad-hoc, but it enables us to find a system of three equations on the three unknowns $ \mathbf{F}, \mathbf{F}_{0}$ and $Y$, so that with a little luck we will find an ``expression'' for those. Actually, as we will see below $x \mapsto Y(x)$ is a convenient change of variable which simplifies our expressions. To summarize, we are looking for a solution $Y \equiv Y(x)$ to the following system:
\begin{equation}  \label{eq:system}
\left\lbrace 
\begin{array}{l}
Y - 2 x \mathbf{F} G(Y)=0, \\
1 +  x  G'(Y) \mathbf{F}^2= \mathbf{F}, \\ 
Y + xG(Y) \mathbf{F}^2= Y \mathbf{F}+xG(0) \mathbf{F}_0^2 .\end{array}\right.
\end{equation}
Thanks to the first equation, we know that $ \mathbf{F}= Y/(2xG(Y)).$ Replacing $ \mathbf{F}$ by this quantity in the second equation, we obtain 
 \begin{eqnarray} 1 + \frac{Y^2 G'(Y)}{4xG(Y)^2} = \frac{Y}{2xG(Y)} \quad \mbox{ that is } \quad  Y = x \left( \frac{4G(Y)^2}{2G(Y) - YG'(Y)}\right),   \label{eq:lagrange}\end{eqnarray} which makes it clear that $Y \equiv Y(x)$ exists as a power series (and even with a positive radius of convergence in a neighborhood of $0$). Once the existence of $Y$ is granted, we use again the system of equations \eqref{eq:system} to obtain an equation that only involves $\mathbf{F}_0(x)$ and $Y (x). $
If we replace in the third equation $ \mathbf{F}$ by $Y/(2xG(Y)) $ (which is a consequence of the first equation) and $x$  by $\frac{Y (2G(Y) - YG'(Y))}{4G(Y)^2}$, we obtain 
$$ \frac{4YG(Y)}{2G(Y) -Y G'(Y)}+ \frac{ \mathbf{F}_0^2 Y G(0) (2G(Y) -Y G'(Y))}{G(Y)^2}= 4 Y.$$
This equation is quadratic in $\mathbf{F}_0(x)$ and using the fact that it has non negative coefficients we obtain 

\begin{equation} \label{eq:F1}\mathbf{F}_0(x) =   \frac{2 G( Y) \sqrt{G( Y) - Y G'( Y)}}{  (2 G( Y) - YG'( Y))\sqrt{G(0) }} \quad \mbox{with} \quad Y\equiv Y(x) \mbox{ as in \eqref{eq:lagrange}}.
\end{equation}
We have found here an ``explicit" solution for $\mathbf{F}_0(x)$ around $x =0$.
Coming back to Tutte's equation, once $ \mathbf{F}_{0}$ is known this equation is quadratic in $ \mathbf{F}$ and we can solve it into
\begin{eqnarray}
\label{Fxy}
\mathbf{F}(x,y) = \frac{y \pm \sqrt{y^2+ 4 G(y) x (G(0)  \mathbf{F}_0(x)^2 x - y) }}{2 G(y) x}.
\end{eqnarray}
The sign in front of the square root can actually change since the function inside the square root vanishes when  $y = Y(x)$ and we need to change branch to keep an analytic function. But we shall not use the exact expression in what follows.

\subsection{Radius of convergence} \label{sec:Y(x)}
In this section we use the explicit resolution of the functional equation \eqref{eq:tuttesimple} to  determine the radius of convergence $x_{c}$ of $ \mathbf{F}_{0}$ and the value of $ \mathbf{F}_{0}(x_{c})$. The important fact for our application to parking being that under the condition $(\star)$ on the existence of $t_c$ in Theorem \ref{thm:phase} we have 
$$  \mathbf{F}_0(x_c) =  \frac{2 G( t_c) \sqrt{G( t_c) - t_c G'( t_c)}}{  (2 G( t_c) - t_cG'(t_c))\sqrt{G(0) }} .$$ 
Recall from \eqref{eq:F1} that $ \mathbf{F}_0(x)$ is an explicit function of $Y(x)$ itself given by the implicit equation \eqref{eq:lagrange}. 

\paragraph{Analyticity of $Y$.} We first determine the analytic properties of the change of variable $x \mapsto Y(x)$. Recall from \eqref{eq:lagrange} that $x$ and $Y=Y(x)$ are linked by the equation 

\begin{equation} \label{eq:xfcty} x= \frac{Y (2G(Y) - YG'(Y))}{4G(Y)^2}, \quad \mbox{ equivalently}\quad  x = \psi(Y(x)) \quad \mbox{ with } \psi(y) =\frac{y (2G(y) - yG'(y))}{4G(y)^2}.  \end{equation}
Note that 
$$ \partial_{y} \psi(y) = \frac{2(G(y)-yG'(y))^{2}-y^{2}G(y)G''(y)}{4G(y)^{3}}$$ and in particular $\psi(0)=0$ and $\psi'(0) >0$ so that by the implicit function theorem, we can define $ Y$ in a neighborhood of $0$ such that $ x = \psi(Y(x))$. Recall the condition $(\star)$ from Theorem \ref{thm:phase} which says that the function $ y \mapsto y^2G''(y)G(y)-2(G(y)-yG'(y))^2$ at the numerator of $\partial_{y} \psi(y)$ reaches $0$ at time $t_c \in (0,\infty)$, see Figure	 \ref{fig:scenarii}.
\begin{remark}  \label{rek:RCVinfty} In particular if $G$ has an infinite radius of convergence then $(\star)$ holds. Indeed, the quantity $ G(y) - y G'(y) = \sum_{k \geq 0} \mu_k (1-k) y^k$ equals $G(0) = \mu_0 >0$ at $y=0$ and is bounded from above by $ \mu_0 - (1- \mu_0 - \mu_1) y^2$ for $y >1$ which goes to $- \infty$ as $ y \to + \infty$. Thus, there exists $z_c$ such that $ G(z_c) - y_c G'(z_c) = 0$ and the function $ y \mapsto y^2G''(y)G(y)-2(G(y)-yG'(y))^2$ is positive at $y = z_c$. Since it is negative at $y=0$, then ($\star$) holds and $t_c \in (0,z_c).$ \\ 
The assumption $(\star)$ is also satisfied when $G$ has a finite radius of convergence $y_c$
and (at least) one of the three quantities $G(y_c)$, $G'(y_c)$, $G''(y_c)$ is infinite.
In case $G(y_c)=\infty$, starting from $y G'(y)-G(y) = y  \sum_{k \geq 0} \mu_k (k-1) y^{k-1}$, and noting that $\mu_k (k-1) y^{k-1}\sim \mu_k k y^{k-1}$, we deduce that $y G'(y)-G(y) \sim y G'(y)$ as $y \to y_c$, hence $G(y) - y G'(y) $ again has limit $-\infty$ as $y \to \infty$, and the same argument as above applies.
The last cases are obvious : in case $G'(y_c)=\infty$ but $G(y_c)<\infty$, $G(y) - y G'(y) $ plainly has limit $-\infty$; last, in case $G''(y_c)=\infty$ but $G'(y_c)<\infty$, we directly have $\lim_{y \to y_c} y^2G''(y)G(y)-2(G(y)-yG'(y))^2 = + \infty.$ 
\end{remark}
To clarify the reader's spirit and for latter discussion, let us classify the possible scenarios according to the three cases identified in  Chen \cite[Figure 1]{chen2021enumeration}, see Figure \ref{fig:scenarii}:
\begin{itemize}
\item the most common case is when $t_{c}$ exists and is strictly less than the radius of convergence $y_{c}$ of $G$. At this point we have 
$$ \left. \frac{ \partial ^{2}}{\partial y \partial y}\psi(y)\right|_{y=t_{c}} =  \frac{t_{c}}{4G(t_{c})^{3}}  \left( 2(t_{c}G'(t_{c})-2G(t_{c}))G''(t_{c})-t_{c}G(t_{c})G'''(t_{c})\right),$$ and since 
$G(t_{c}) \geq t_{c}G'(t_{c})$ this second derivative is strictly negative so that the function $ y \mapsto \psi(y)$ reaches a local maxima at this point. We call this situation the \textbf{generic} situation.
\item it could also happen that $t_{c}$ exists and is equal to the radius of convergence of $G$. In this case, although $y \mapsto \psi(y)$ reaches a maxima at $t_{c}$, the local behavior around the maximum may not be quadratic. We call this situation the \textbf{non-generic situation}.
\item Otherwise $t_{c}$ does not exists and in particular the radius $y_{c}$ of convergence of $G$ is finite and $y \mapsto \psi(y)$ has a finite positive derivative at $y_{c}$. This is the \textbf{dense} situation which leave aside for the moment.
\end{itemize}

\begin{figure}[!h]
 \begin{center}
 \includegraphics[width=15cm]{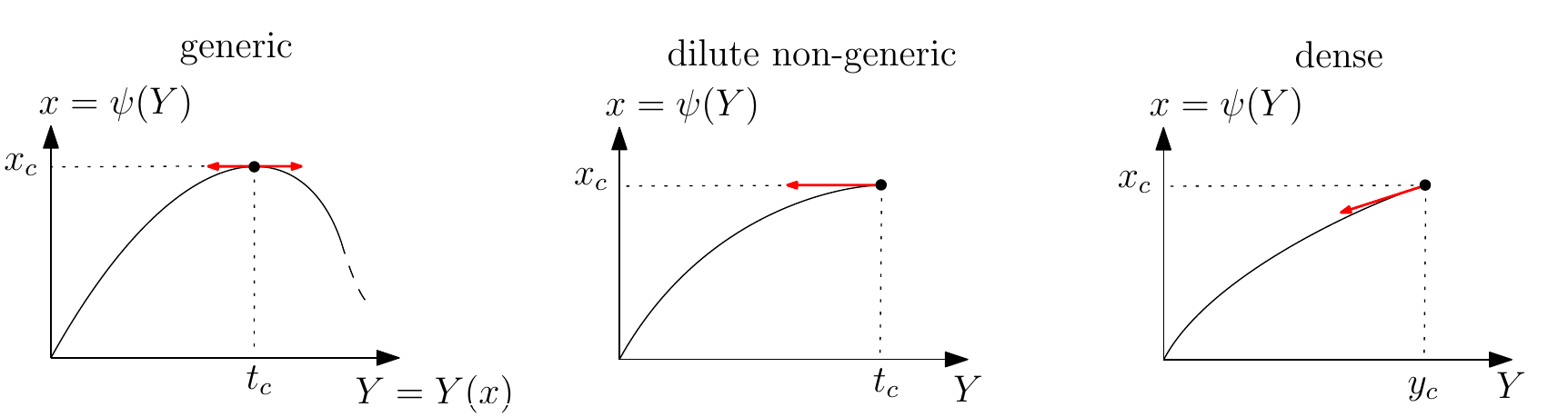}
 \caption{Illustration of the three scenarios  for the change of variable $ x \mapsto Y(x)$. Our standing assumption $(\star)$ holds in the first two cases. \label{fig:scenarii}}
 \end{center}
 \end{figure}

Under the assumption $(\star)$ --i.e.\ except in the dense situation-- we introduce 
 \begin{eqnarray} \label{eq:defxc} x_{c} = \psi(t_{c})=\frac{t_{c} (2G(t_{c}) - t_{c}G'(t_{c}))}{4G(t_{c})^2}. \end{eqnarray}
\begin{lemma} Under assumption $( \star)$ the function $x \mapsto Y(x)$ is increasing and  analytic over $[0,x_c)$ and furthermore 
$$ \lim_{x \to x_c} Y'(x) = \infty.$$
\end{lemma}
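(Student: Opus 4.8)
The plan is to realize $Y$ as the compositional inverse of the explicit change of variable $\psi$ from \eqref{eq:xfcty} and to read off all three assertions from the sign and the vanishing of $\psi'$. Using the expression for $\partial_y\psi$ already displayed, write
\[
\psi'(y)=\frac{N(y)}{4G(y)^3},\qquad N(y):=2\big(G(y)-yG'(y)\big)^2-y^2G(y)G''(y),
\]
and observe that $N$ is \emph{precisely} the function whose minimal positive zero defines $t_c$ in $(\star)$. Since $N(0)=2G(0)^2=2\mu_0^2>0$ and $t_c$ is the first positive zero of $N$, continuity of $N$ forces $N>0$ on $[0,t_c)$ and $N(t_c)=0$. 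As $G>0$ on $[0,t_c]$, this gives $\psi'>0$ on $[0,t_c)$ together with $\psi'(t_c)=0$. Hence $\psi$ is analytic and strictly increasing on $[0,t_c)$, with $\psi(0)=0$ and $\lim_{y\to t_c^-}\psi(y)=\psi(t_c)=x_c$ (the boundary value being justified in the last paragraph).

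I would then invoke the analytic inverse function theorem. Because $\psi$ is real-analytic on $[0,t_c)$ with $\psi'>0$ there, its inverse $Y=\psi^{-1}$ is real-analytic and strictly increasing on the image $\psi([0,t_c))=[0,x_c)$, with $Y(x)\to t_c$ as $x\to x_c^-$; this already yields monotonicity and analyticity of $Y$ on $[0,x_c)$. For the blow-up of the derivative, I differentiate the defining relation $x=\psi(Y(x))$ to get
\[
Y'(x)=\frac{1}{\psi'(Y(x))}.
\]
Letting $x\to x_c$, we have $Y(x)\to t_c$ and $\psi'(Y(x))\to\psi'(t_c)=0^{+}$ (the limit being from the positive side since $\psi'>0$ on $[0,t_c)$), whence $Y'(x)\to+\infty$, which is the final assertion.

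The one point requiring genuine care — and the only real obstacle — is the regularity of $\psi$ at the endpoint $t_c$ in the \textbf{non-generic} situation $t_c=y_c$, where a priori $G$, $G'$ or $G''$ could fail to have finite limits at the radius of convergence. I must guarantee that $G(t_c),G'(t_c),G''(t_c)$ are all finite, so that $N$ is continuous at $t_c$ with $N(t_c)=0$, that $\psi$ extends continuously with $\psi(t_c)=x_c$, and that $\psi'(t_c)=0$ genuinely. This follows from the dichotomy analysed in Remark \ref{rek:RCVinfty}: were any of $G(t_c),G'(t_c),G''(t_c)$ infinite, then either $G(y)-yG'(y)\to-\infty$ (forcing $N$, hence a sign change of $N$, strictly inside $(0,y_c)$) or $y^2G(y)G''(y)\to+\infty$ (forcing $N\to-\infty$ near $y_c$), in each case producing a zero of $N$ in the open interval $(0,y_c)$ and contradicting the minimality of $t_c=y_c$. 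Thus in every case allowed by $(\star)$ the endpoint is regular enough for the argument above, and no separate treatment of the generic versus non-generic case is needed beyond this finiteness check.
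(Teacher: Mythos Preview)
Your proof is correct and follows essentially the same route as the paper's: both realize $Y$ as the inverse of $\psi$, use $\psi'>0$ on $[0,t_c)$ together with $\psi'(t_c)=0$ to deduce monotonicity, analyticity, and the blow-up of $Y'$ via $Y'(x)=1/\psi'(Y(x))$. Your version is simply more detailed; in particular, the explicit verification that $G,G',G''$ are finite at $t_c$ in the non-generic case (by appealing to Remark~\ref{rek:RCVinfty}) fills in a point the paper's terse proof leaves implicit.
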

\begin{proof} Under the assumption $(\star)$ the function $y \mapsto \psi(y)$ is increasing and analytic over $[0,t_{c})$ so that by the analytic version of the implicit function theorem one can define its increasing  inverse function $ x \mapsto Y(x)$ over $[0,x_{c})$. Note that since $\psi'(y) \to 0$ as $y \uparrow y_{c}$ we have $Y'(x) \to \infty$ as $x \uparrow x_{c}$.
 \end{proof}

\paragraph{Radius of convergence of $ \mathbf{F}_0$.} We still suppose $ (\star)$. Coming back to $ \mathbf{F}_0$, notice  that $ (G(Y(x))-Y(x)G'(Y(x)))$ is always positive for $x \in [0,x_c]$,  thus by Equation \eqref{eq:F1} the function $ \mathbf{F}_0$ is also analytic over $[0,x_c)$. Since $ \mathbf{F}_0$ only has positive coefficients, by  Pringsheim's Theorem (see for example \cite[Theorem IV.6 p.240]{Flajolet:analytic}), its radius of convergence is \emph{at least} $x_c$. The following lemma shows that it actually coincides with it 
\begin{lemma} Suppose $( \star)$ then we have 
$$ \lim_{x \to x_c} \mathbf{F}''_0(x)= \infty,$$
in particular the radius of convergence of $ \mathbf{F}_0$ is equal to $x_c$.
\end{lemma}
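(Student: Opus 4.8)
The plan is to obtain the blow-up of $\mathbf{F}_0''$ by differentiating the closed formula \eqref{eq:F1} twice and feeding in the preceding lemma, which guarantees $Y'(x)\to\infty$ as $x\to x_c$. Squaring \eqref{eq:F1} clears the square root and yields the identity
\[
\mathbf{F}_0(x)^2=\frac{\Phi(Y(x))}{G(0)},\qquad
\Phi(y):=\frac{4G(y)^2\big(G(y)-yG'(y)\big)}{\big(2G(y)-yG'(y)\big)^2}.
\]
Since $x=\psi(Y(x))$, we have $Y'=1/\psi'(Y)$, so a first differentiation gives $(\mathbf{F}_0^2)'(x)=\Phi'(Y)/(G(0)\psi'(Y))$.

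The heart of the argument is a cancellation: the factor making $\psi'$ vanish at $t_c$ is also a factor of $\Phi'$. Writing $N(y):=2\big(G(y)-yG'(y)\big)^2-y^2G(y)G''(y)$ for the numerator of $\partial_y\psi$ recorded after \eqref{eq:xfcty}, so that $\psi'(y)=N(y)/(4G(y)^3)$, I would expand the logarithmic derivative $(\ln\Phi)'=\tfrac{2G'}{G}-\tfrac{yG''}{G-yG'}-\tfrac{2(G'-yG'')}{2G-yG'}$ over the common denominator $G\,(G-yG')\,(2G-yG')$; using $(2G-yG')-G=G-yG'$ and $(2G-yG')-2(G-yG')=yG'$ the numerator collapses to $G'\,N(y)$, whence $\Phi'(y)=4G(y)G'(y)N(y)/\big(2G(y)-yG'(y)\big)^3$. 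Thus $N$ cancels and
\[
(\mathbf{F}_0^2)'(x)=\frac{\Theta(Y(x))}{G(0)},\qquad
\Theta(y):=\frac{\Phi'(y)}{\psi'(y)}=\frac{16\,G(y)^4G'(y)}{\big(2G(y)-yG'(y)\big)^3},
\]
which is analytic and positive at $t_c$ (the denominator stays positive on $[0,x_c]$, as already noted for $G-yG'$). In particular $\mathbf{F}_0'$ stays \emph{finite} at $x_c$, the analytic shadow of $\mathbb{E}[X]<\infty$ advertised in the introduction, and this is precisely why the statement concerns $\mathbf{F}_0''$ and not $\mathbf{F}_0'$.

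Differentiating once more, $(\mathbf{F}_0^2)''(x)=\Theta'(Y)\,Y'/G(0)$, and since $Y'(x)\to\infty$ it suffices to prove $\Theta'(t_c)\neq 0$. Eliminating $G''(t_c)$ through $(\star)$, i.e.\ $t_c^2G(t_c)G''(t_c)=2\big(G(t_c)-t_cG'(t_c)\big)^2$, the logarithmic derivative of $\Theta$ simplifies at criticality to
\[
\frac{\Theta'(t_c)}{\Theta(t_c)}=\frac{2G(t_c)-t_cG'(t_c)}{t_c^2\,G'(t_c)}>0 ,
\]
all quantities being positive, so $(\mathbf{F}_0^2)''(x)\to+\infty$. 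Expanding $(\mathbf{F}_0^2)''=2(\mathbf{F}_0')^2+2\mathbf{F}_0\mathbf{F}_0''$ and using that $\mathbf{F}_0'$ is bounded while $\mathbf{F}_0(x)\to\mathbf{F}_0(x_c)>0$, I conclude $\mathbf{F}_0''(x)\to\infty$. As $\mathbf{F}_0$ is analytic on $[0,x_c)$ with nonnegative coefficients, this rules out analyticity at $x_c$, so by Pringsheim's theorem its radius of convergence is exactly $x_c$.

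I expect the main obstacle to be spotting and verifying the two algebraic identities, especially the common factor $N$ in $\Phi'$ and $\psi'$: this is what kills the would-be $(x_c-x)^{-1}$ singularity and leaves only a $(x_c-x)^{-1/2}$-type divergence in $\mathbf{F}_0''$. In the non-generic case, where $t_c$ equals the radius of convergence of $G$ and one of $G,G',G''$ diverges there, the identity for $(\mathbf{F}_0^2)''$ still holds and the divergence is only reinforced, since the $G''$-terms in $(\ln\Theta)'=\tfrac{4G'}{G}+\tfrac{G''}{G'}-\tfrac{3(G'-yG'')}{2G-yG'}$ enter with a positive sign; I would dispatch that case by the same computation.
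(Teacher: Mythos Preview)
Your proof is correct and follows essentially the same route as the paper: differentiate twice through the change of variable $x\mapsto Y(x)$, using $Y'=1/\psi'(Y)$, and observe that the vanishing factor $N(y)=2(G-yG')^2-y^2GG''$ cancels at first order so that $\mathbf F_0'$ stays bounded while $\mathbf F_0''$ inherits the blow-up of $Y'$. Your choice to work with $\mathbf F_0^2=\Phi(Y)/G(0)$ rather than $\mathbf F_0$ itself is a pleasant simplification that avoids differentiating through the square root, and your explicit identification of $N$ as the common factor of $\Phi'$ and $\psi'$, together with the closed form $\Theta'(t_c)/\Theta(t_c)=(2G(t_c)-t_cG'(t_c))/(t_c^2G'(t_c))$, makes the mechanism more transparent than the paper's presentation, which simply displays the final expression for $\mathbf F_0''$ with $N$ in the denominator.
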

\begin{proof}
We use our explicit computations of $ \mathbf{F}_0$ and $ Y$ to derive formulas for the first two derivatives of $x$. Even if we don't need it for this lemma, we start with the first derivative. We take the expression of $ \mathbf{F}_0$ given by Equation \eqref{eq:F1}, differentiate it with respect to $x$ and replace the occurence of $ Y'(x)$ by $1/ (\partial_{y} \psi (Y(x)))$ thanks to Equation \eqref{eq:xfcty}. We obtain 
$$ \mathbf{F}'_{0} (x) = \frac{4 G(Y(x))^3 G'(Y(x))}{\sqrt{G(Y(x))-Y(x) G'(Y(x))} (2 G(Y(x)) -Y(x) G'(Y(x)))^2}.$$
This quantity has a finite limit when $ Y(x)$ converges to $t_c$. We thus need to compute the second derivative of $ \mathbf{F}_0$. To do so, we differentiate the above expression of $ \mathbf{F}'_0$ and again replace the occurence of $ Y'(x)$ by $1/ (\partial_{y} \psi (Y(x)))$. We then obtain a fraction involving the derivatives of $G$ at $Y(x)$. 
Using the definition of $t_c$ under assumption $(\star)$, we can show that 
\begin{eqnarray*} \lim_{x \to x_c} \mathbf{F}_0'' (x) = \lim_{t \to t_c}\frac{16 G(t)^6 (2 G(t) - t G'(t))^3 (G(t) -  t G'(t)) (2 G(t) - t G'(t))^2}{t^2 (G(t) - t G'(t))^{3/2} \cdot (2 (G(t) - t G'(t))^2 -  t^2 G(t) G''(t))} = + \infty, 
  \end{eqnarray*}
since all but the factor $(2 (G(t) - t G'(t))^2 -  t^2 G(t) G''(t)$ are positive for $t< t_c$ and have a positive limit as $t \to t_c$. 
\end{proof}




\section{Probabilistic consequences}

\label{sec:proba}
Armed with our enumeration results and the  criterion  of Proposition \ref{prop:criti}, we can now proceed to the proof of our main results.
\subsection{Theorem \ref{thm:phase}: Location of the threshold}

Recall that by Proposition \ref{prop:criti}, the parking process is subcritical if and only if there exists a positive solution to \eqref{eq:fundabis}. When $(\star)$ holds, since the function $ x \mapsto G(0) x \mathbf{F}_{0}(x)$ is strictly increasing, Equation \eqref{eq:fundabis} has a solution if and only if $G(0) x_c \mathbf{F}_{0}(x_c)^2 \geq1 $ where $x_c$ is the radius of convergence of $ \mathbf{F}_0$ found in the previous paragraph.  Now, since $t_c = Y (x_c)$ and plugging the value of $\mathbf{F}_0(x_c)$ given by Equation \eqref{eq:F1} in $G(0) x_c \mathbf{F}_{0}(x_c)^2 \geq1$, we obtain $t_c+ \frac{t_c G(t_c)}{t_c G'(t_c) - 2 G(t_c)} \geq 1$.

By definition of $t_c$ under Assumption ($\star$), the quantity $t_cG'(t_c) - 2G(t_c)<t_cG'(t_c) - G(t_c)$ is always negative. Hence there is a solution $x$ to \eqref{eq:fundabis} if and only if  $t_c$ satisfies $$(t_c-2)G(t_c) \geq t_c(t_c-1)G'(t_c), $$
which together with Proposition \ref{prop:criti} concludes the proof of Theorem \ref{thm:phase}. 

\begin{remark}\label{tcgeq2} Notice that if $t_c$ satisfies the condition of Theorem \ref{thm:phase}, then it is greater than $2$. This implies that if the parking process is subcritical, then the radius of convergence of the generating function $G$ of the car arrivals is at least $2$. To see it, first note that the inequality $(t-2)G(t) \geq t(t-1)G'(t)$ can not be satisfied for $t \in [1,2]$ since the left-hand side is non-positive and equals $0$ only for $t=2$, whereas the right-hand side is non-negative and equals $0$ for $t=0$ and $t=1$ only. 
Neither can this inequality be satisfied for $ t \in (0,1)$, because for such $t$, we can bound from above the quantity $  t(t-1)G'(t)/(t-2)$ by $ (3 -2\sqrt{2})G'(1) \leq  (3 -2\sqrt{2})G'(1) \leq 1/5$ since $ G'(1) = \mathbb{E}[A] \leq 1$ in the subcritical case. On the other side, the quantity  $G(t)$ is bounded from below by $G(0) \geq 1/2.$
\end{remark}

\subsection{Theorem \ref{thm:critical}: critical computations}
Before moving on to the critical computation (Theorem \ref{thm:critical}) let us prove that the critical case caracterized by the equality in the second display of Theorem \ref{thm:phase} actually corresponds to the natural fact that one "cannot increase the number of cars" and stay subcritical:

\begin{lemma}[Criticality] Suppose $(\star)$. Then we have equality in \eqref{eq:threshold} iff for any $ \varepsilon>0$ the law with generating function $G_{ \varepsilon} (t)= G (t) + \varepsilon t - \varepsilon$ is supercritical.
\end{lemma}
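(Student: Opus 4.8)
The plan is to reduce the whole statement to the behaviour of one explicit one-variable function. For a generating function $H$ satisfying $(\star)$, with critical point $t_c^H$, set $g_H(t) := (t-2)H(t) - t(t-1)H'(t)$; by Theorem \ref{thm:phase}, $H$ is subcritical if and only if $g_H(t_c^H) \geq 0$, and for $G$ the equality in \eqref{eq:threshold} is precisely $g_G(t_c) = 0$. Since $G_\varepsilon(t) = G(t) + \varepsilon(t-1)$ has $G_\varepsilon' = G' + \varepsilon$ and $G_\varepsilon'' = G''$, a one-line computation gives the clean linear shift
$$ g_{G_\varepsilon}(t) = g_G(t) - 2\varepsilon(t-1). $$
Likewise, writing $\Delta(t) = 2(G(t)-tG'(t))^2 - t^2 G(t)G''(t)$ for the numerator of $\partial_y\psi$, see \eqref{eq:xfcty} (so that, by $(\star)$, $t_c$ is its first zero and $\Delta>0$ on $(0,t_c)$), the same substitution yields $\Delta_{G_\varepsilon}(t) = \Delta(t) - \varepsilon\,h(t) + 2\varepsilon^2$ with $h(t) = 4(G(t)-tG'(t)) + t^2(t-1)G''(t)$, which is positive for $t>1$ because $G(t)-tG'(t)>0$ on $[0,t_c]$ and $G''\geq 0$.

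The key monotonicity input is the following. Along the change of variable $x=\psi(Y)$ of Section \ref{sec:Y(x)} one has, by \eqref{eq:F1}, $G(0)\,x\,\mathbf{F}_0(x)^2 = \Psi(Y)$ where $\Psi(Y) := \frac{Y(G(Y)-YG'(Y))}{2G(Y)-YG'(Y)}$; since $Y \mapsto \psi(Y)$ is an increasing bijection of $[0,t_c]$ onto $[0,x_c]$ and $x \mapsto G(0)\,x\,\mathbf{F}_0(x)^2$ is strictly increasing (its coefficients are nonnegative), $\Psi$ is strictly increasing on $[0,t_c]$, with $\Psi(t_c) = G(0)\,x_c\,\mathbf{F}_0(x_c)^2$, the quantity whose comparison to $1$ decides subcriticality (Proposition \ref{prop:criti}). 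As $2G-YG'>0$ on $(0,t_c]$, one checks directly that $\Psi(Y)\geq 1 \iff g_G(Y)\geq 0$. Consequently, \emph{at criticality} (when $g_G(t_c)=0$, i.e. $\Psi(t_c)=1$) strict monotonicity forces $g_G(Y)<0$ for every $Y \in (0,t_c)$, whereas in the strictly subcritical case $g_G(t_c)>0$.

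It remains to locate $t_\varepsilon := t_c^{G_\varepsilon}$ for small $\varepsilon>0$. Since $\Delta(t_c)=0$ we get $\Delta_{G_\varepsilon}(t_c) = -\varepsilon(h(t_c)-2\varepsilon)<0$ once $0<\varepsilon<h(t_c)/2$, while $\Delta_{G_\varepsilon}(0^+) = 2(\mu_0-\varepsilon)^2>0$; hence $\Delta_{G_\varepsilon}$ vanishes inside $(0,t_c)$ and $t_\varepsilon<t_c$, and a short compactness argument (the first zero of $\Delta_{G_\varepsilon}$ cannot converge to a point of $(0,t_c)$ where $\Delta>0$, nor to $0$) gives $t_\varepsilon \to t_c$ as $\varepsilon \downarrow 0$. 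Now both directions follow. If \eqref{eq:threshold} is an equality then for small $\varepsilon>0$ we have $t_\varepsilon<t_c$, so $g_G(t_\varepsilon)<0$ by the previous paragraph, and since $t_c>2$ by Remark \ref{tcgeq2} we also have $t_\varepsilon>1$, whence $g_{G_\varepsilon}(t_\varepsilon)=g_G(t_\varepsilon)-2\varepsilon(t_\varepsilon-1)<0$ and $G_\varepsilon$ is supercritical; the remaining $\varepsilon$ (up to $\varepsilon=\mu_0$) are covered because $G_\varepsilon$ stochastically increases in $\varepsilon$ and supercriticality is monotone under stochastic domination — couple the arrivals so that $A_u \leq A_u'$, use that the flux, hence $X$, is monotone in the arrivals, and invoke the dichotomy of Lemma \ref{lem:sub/super}. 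Conversely, if \eqref{eq:threshold} is strict then $g_G(t_c)>0$, so by continuity $g_{G_\varepsilon}(t_\varepsilon) \to g_G(t_c)>0$ as $\varepsilon\downarrow 0$, and $G_\varepsilon$ is subcritical for small $\varepsilon>0$; thus it is not the case that every $G_\varepsilon$ is supercritical. (We work throughout in the subcritical regime for $G$, which is the setting of the critical computations.)

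The step I expect to be most delicate is the control of $t_\varepsilon$ in the \textbf{non-generic} case $t_c = y_c$, where $G''(t_c)$ — and hence $h(t_c)$ — may be infinite and the zero of $\Delta$ at $t_c$ need not be transversal. There the sign $\Delta_{G_\varepsilon}(t_c)<0$ and the convergence $t_\varepsilon \to t_c$ are better obtained from the monotonicity of $\varepsilon \mapsto \Delta_{G_\varepsilon}$ (and of $\varepsilon \mapsto t_\varepsilon$) than from the first-order expansion above; but once $t_\varepsilon<t_c$ is known, the monotonicity of subcriticality closes the argument exactly as in the generic case.
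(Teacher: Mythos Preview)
Your proof is correct, and your argument for the harder direction (equality $\Rightarrow$ every $G_\varepsilon$ is supercritical) is genuinely different from the paper's and, in fact, cleaner. The paper proceeds by a two-step Taylor expansion: it first expands $\partial_y\psi_\varepsilon$ around $t_c$ to show that $t_c-t_c^\varepsilon$ is of exact order $\varepsilon$, and then expands $g_{G_\varepsilon}$ at $t_c^\varepsilon$ to get $g_{G_\varepsilon}(t_c^\varepsilon)=-2\varepsilon(t_c^\varepsilon-1)+O((t_c^\varepsilon-t_c)^2)$, which is negative for small $\varepsilon$; this implicitly relies on the identity $g_G'(t_c)=0$ (a consequence of combining $(\star)$ with the critical equality) and requires the generic situation for the expansion to make sense. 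You replace all of this by the global observation that $\Psi(Y)=G(0)\,x\,\mathbf{F}_0(x)^2$ is strictly increasing on $[0,t_c]$ and equals $1$ at $t_c$, hence $g_G(Y)<0$ on all of $(0,t_c)$; since $t_\varepsilon<t_c$ this gives $g_{G_\varepsilon}(t_\varepsilon)=g_G(t_\varepsilon)-2\varepsilon(t_\varepsilon-1)<0$ immediately, with no need to know how fast $t_\varepsilon\to t_c$. This is more robust (it does not use any second-order information on $\psi$ near $t_c$, so it extends to the non-generic case without extra work) and it also covers the passage from ``small $\varepsilon$'' to ``all $\varepsilon$'' via the stochastic monotonicity of $\varepsilon\mapsto G_\varepsilon$, a step the paper leaves implicit. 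The converse direction (strict inequality $\Rightarrow$ some $G_\varepsilon$ is subcritical) is handled the same way in both proofs, by continuity of $\varepsilon\mapsto g_{G_\varepsilon}(t_\varepsilon)$.
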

\proof  
Suppose that $ \mu$ is subcritical in the sense of Theorem \ref{thm:phase} and look at the probability measure $ \mu_{ \varepsilon}$ such that its generating function is given by $ G_{ \varepsilon} (t)= G (t) + \varepsilon t - \varepsilon$ for some $ \varepsilon >0$. First notice that $ \mu_{ \varepsilon}$ satisfies Assumption ($\star$) for $ \varepsilon$ small enough. Indeed the radius of convergence of $ G_{\varepsilon}$ is that of $G$ and the quantity
\begin{eqnarray*}2 (G_{ \varepsilon}(t) - t G_{ \varepsilon}'(t))^2 -  t^2 G_{ \varepsilon}(t) G_{ \varepsilon}''(t) &=& (2 (G(t) - t G'(t) - \varepsilon)^2 -  t^2 (G(t) + \varepsilon(t-1))G''(t) \\
&=&2 (G(t) - t G'(t))^2  -  t^2 G(t)G''(t) \\
&\quad& \qquad+ 2 \varepsilon^2  - 2\varepsilon (G(t) - t G'(t)) -  \varepsilon t^2 (t-1)G''(t)
\end{eqnarray*}
is negative at $ t_c$ when $ \varepsilon$ is small enough, so that $ t_c^{ \varepsilon} := \min \{ t \geq 0,\ 2 (G_{ \varepsilon}(t) - t G_{ \varepsilon}'(t))^2 =  t^2 G_{ \varepsilon}(t) G_{ \varepsilon}''(t) \} < t_c$ and the function $ \varepsilon \mapsto t_{c}^{ \varepsilon}$ is continuous in a positive neighborhood of $0$. To determine whether $ \mu_{ \varepsilon}$ is subcritical or not, we then need to determine the sign of 
$$ (t_c^{ \varepsilon}-2)G_{ \varepsilon}(t_c^{ \varepsilon}) - t_c^{ \varepsilon}(t_c^{ \varepsilon}-1)G_{ \varepsilon}'(t_c^{ \varepsilon}), $$
which is then continuous is $ \varepsilon$. Thus if Equation \eqref{eq:threshold} is not an equality, we can increase $ \mu$ and remain subcritical. 

Suppose now that  \eqref{eq:threshold} is an equality. We can show that 
$$ \partial_{ y} \varphi_{ \varepsilon} (t_c - \delta)  = \frac{6 G(t_c)+ t_c^2 G^{(3)}(t_c) (t_c - 1)^3 }{4 G(t_c)^2(t_c - 1)^3 } \delta - \frac{3}{2 (t_c-1) G(t_c)^2} \varepsilon + o(\varepsilon) + o(\delta). $$
When $t_c - \delta = t_c^{\varepsilon}$, then the left hand side is zero so that the main asymptotic on the right hand side should be $0$. Hence $ \delta = t_c - t_c^{ \varepsilon}$ is of order $\varepsilon.$
Moreover, 
$$(t_c^{ \varepsilon}-2)G_{ \varepsilon}(t_c^{ \varepsilon}) - t_c^{ \varepsilon}(t_c^{ \varepsilon}-1)G_{ \varepsilon}'(t_c^{ \varepsilon}) = -2 \varepsilon ( t_c^{ \varepsilon}-1 ) + O( (t_c^{ \varepsilon} - t_c)^2). $$
is negative when $ \varepsilon$ small enough and $ \mu_{ \varepsilon}$ is supercritical by Theorem \ref{thm:phase}. \endproof

\begin{proof}[Proof of Theorem \ref{thm:critical}]. Let us focus now on the case when \eqref{eq:threshold} is an equality. In that case, since \eqref{eq:fundabis} is an equality for $x=x_c$, then $x_c = p_{ \circ}.$  If $x$ satisfies Equation \eqref{eq:xfcty} and the parking is critical, so that $y =t_c$ is solution of $(t-2)G(t) = t(t-1)G'(t)$, then \begin{equation}\label{eq:xf0crit}x = x_c = \frac{t_c^2}{4(t_c-1) G(t_c)} \qquad \mbox{and} \qquad \mathbf{F}_0(x_c)^2 = \frac{1}{p_{ \circ} G(0)} =  \frac{1}{p_{ \circ} \mu_0}.\end{equation}
Moreover, using Equation \eqref{eq:funda2}, we obtain 
$$p_{ \bullet} = \sqrt{ \frac{p_{ \circ}}{ \mu_0}}- p_{ \circ},$$
and this concludes the proof of Theorem \ref{thm:critical}.
\end{proof}


\begin{remark} \label{ref:p0} The fact that $ p_{ \circ} >1/2$ is a consequence of the equation for $t$ given by Theorem \ref{thm:phase}. Indeed, in the critical case, then $ p_{ \circ} = x_c = t_c^2/(4(t_c-1) G(t_c))$ where $t_c$ is given by Theorem \ref{thm:phase}. But since $ t_c > 1$, we have $ G(t_c) - G(1) \leq (t_c- 1)G'(t_c) = (t_c-2)G(t_c)/t_c$. Thus $G(t_c) \leq t_c/2$ and
 $$ p_{ \circ} \geq \frac{2t_c}{t_c-1} > 1/2.$$
\end{remark}
 
 

\subsection{Examples}
Let us proceed to the computation of the critical threshold for parking for various families of stochastically  increasing laws. In the first four cases below, it is easy to check that condition $(\star)$ holds so that we can just apply the general formulas. In the last example we explain how our techniques can be applied even if $(\star)$ does not hold. 
\paragraph{Binary$_{0/2}$ car arrivals.} As a first example, we can imagine that either $0$ or $2$ cars arrive at each spot, i.e.\ the law of the car arrivals is $ \mu = (1- \frac{ \alpha}{2}) \delta_0 + \frac{ \alpha}{2} \delta_2$, so that $ G(t) = (1- \frac{ \alpha}{2}) +  \frac{ \alpha}{2}t^2$. This is the example considered in  \cite[Proposition 3.5]{GP19} and \cite[Proposition 4]{BBJ19}. Explicit computations show in this case that

$$ t_{c}=Y(x_c) = \sqrt{ \frac{2- \alpha}{3 \alpha}}, \qquad x_c = \frac{3 \sqrt{3}}{16 \sqrt{ \alpha(2- \alpha)}} \qquad \mbox{and} \qquad \mathbf{F}_0(x_c)= \frac{4\sqrt{6}}{9}\cdot $$
Note that $ \mathbf{F}_{0} (x_c)$ does not depend on $ \alpha$, see the remark below. 
We then see that for $t = t_c$, the Inequality \eqref{eq:threshold} is quadratic in $ \alpha$ and is satisfied as soon as  $ \alpha < \alpha_c = 1/14$. 
\paragraph{Binary$_{0/k}$ car arrivals.} In the case when $ \mu = (1- \frac{ \alpha}{k}) \delta_0 + \frac{ \alpha}{k} \delta_k$, so that $ G(t) = (1- \frac{ \alpha}{k}) +  \frac{ \alpha}{k}t^k$ with $k \geq 3$, we have
$$ Y(x_c) = \left(\frac{ (k - a) (-4 + k (3 + k - \sqrt{(k-1) (k+7)})}{2a ( k-2) ( k-1)}\right)^{1/k} , $$ 
 $$ \mathbf{F}_0 (x_c) = \frac{\sqrt{2} (3 k - \sqrt{(k-1) (k+7)}-3 )}{( k-2) (k-1) \sqrt{\frac{5 - k + \sqrt{(k-1) ( k+7)}}{(k-1) k}}}. $$
so that the model is critical at $$ \alpha_c ( \mathrm{Binary}_{0/k}) = \frac{k}{1 + 2^{-k-2} \left( 3 + \sqrt{ \frac{k+7}{k-1}}\right)^k \left( (k-1)(k+4)+ k \sqrt{(k+7)(k-1)}\right)} . $$
\paragraph{Poisson.} Suppose the law of the car arrivals is Poisson with mean $ \alpha> 0$, so that in this case $G(t) = e^{ \alpha(t - 1)}. $ Again, explicit computation show that

$$ Y(x_c) = \frac{2- \sqrt{2}}{ \alpha}, \qquad x_c = \frac{(\sqrt{2}-1) \mathrm{e}^{ \alpha-2 + \sqrt{2} }}{2 \alpha} \qquad \mbox{and} \qquad \mathbf{F}_0(x_c) =  \sqrt{2 ( \sqrt{2}-1)} \mathrm{e}^{1 - 1/\sqrt{2}}, $$
so that the model is subcritical for parking as long as $\alpha \leq \alpha_c $ with $$ \alpha_c = 3 - 2 \sqrt{2}.$$

\paragraph{Geometric.} Consider here the case when $ G(t) = 1/(1+ \alpha- \alpha t)$. Then
$$ Y(x_c) = \frac{1+ \alpha}{3 \alpha}, \qquad x_c = \frac{(1+ \alpha)^2}{12 \alpha} \qquad \mbox{and} \qquad \mathbf{F}_0(x_c) = \frac{2}{ \sqrt{3}} \cdot$$
so that the model is subcritical for parking as long as $ \alpha \leq \alpha_c $ with $ \alpha_c = 1/8.$

\begin{remark}[Combinatorial counting] 
The reader may be puzzled by the fact that in the last four cases the value $ \mathbf{F}_{0}( x_{c})$ does not depend on the parameter $\alpha$. 
This is because in each case, the dependence on $\alpha$ of the  $\mu_{\alpha}$-weight of a fully parked tree of size $n$ is of the form $(c_{\alpha})^{n}$, for a constant $c_{\alpha}$ depending on $\alpha$ that cancels out at criticality. To wit, consider fully-parked trees associated with $0/k$ arrivals : in this case, for $n$ a multiple of $k$, fully parked trees of size $n$ have weight $\prod_{v} \mu_{a(v)} = \mu_{0}^{n/k} \mu_{k}^{(1-1/k)n}  =(\mu_{0}^{1/k} \mu_{k}^{(1-1/k)})^n$.

\end{remark}


\paragraph{Without $ (\star)$.} \label{sec:non-generic} When hypothesis $(\star)$ is not satisfied we can still apply our method: If the generating series $G$ has a radius of convergence $y_{c}$ then the value $t_{c}$ is then replaced by 
$$\tilde{t} _{c} = \min\left\{ y_{c} , \min \{ t \geq 0:\ 2(G(t)- t G'(t))^2 = t^2 G(t) G''(t) \}\right\},$$
and we need to check that $\tilde{x}_{c}$ defined analogously by \eqref{eq:defxc} is again the radius of convergence of the series $ \mathbf{F}_{0}$ (we did not try to prove such a general statement here). Then using Proposition \ref{prop:criti}, the subcriticality of the parking is equivalent to the fact that $ \mu_{0} \tilde{x}_{c}(  \mathbf{F}_{0}(\tilde{x}_{c}))^{2} \geq 1$, the only different point is that we cannot use the equality $2(G(\tilde{t}_{c})- \tilde{t}_{c} G'(\tilde{t}_{c}))^2 = \tilde{t}_{c}^2 G(\tilde{t}_{c}) G''(\tilde{t}_{c})$ to further simplify the expression. As an example of such law, consider the generating function 
  \begin{eqnarray} \label{eq:ng} G(t) = 1+ \frac{1+ t^2}{26} - \frac{1}{13} \left( \frac{3-t}{2}\right)^{7/3}.  \end{eqnarray}
The radius of convergence of $G$ is equal to $3$ but $G'(3)$ and $G''(3)$ exist. An explicit computation shows that $(\star)$ holds with $t_{c}=y_{c}= \tilde{t}_{c}=3$ for $G$, and furthermore $G$ is critical for the parking process.  However if one considers $\tilde{G} = 0.9 + 0.1G$ then $\tilde{t}_{c} = 3$ and $(\star)$ does not hold but still $ \tilde{G}$ is subcritical for the parking process.

\section{Extensions and comments}

\label{sec:comments}
In this work, we voluntarily stick to the simplest case of the binary tree with i.i.d.\ arrivals without specific conditions to keep the paper accessible to a wide audience. Let us mention a few perspectives that our approach opens:
\subsection{Non-generic and dense  case} In this work, we focused on the localization of the threshold and on the computation of some critical quantities. One could also try to get scaling limits of critical components and compute several critical or near-critical exponents. As mentioned in the introduction, we expect that  a large family of critical car arrivals (say, with bounded support) belong to a common universality class where we expect that the cluster size of the root has a tail that decays as $ n^{-5/2}$ as $ n \to \infty$ and where the scaling limits of the critical components are given by $3/2$-stable Growth-Fragmentation trees. But when the car arrivals have a heavy tail (and when the parameters are fine-tuned so that the law is critical), we hope to see different universality classes. Actually, as seen in Section \ref{sec:Y(x)}, the singular behavior of $ \mathbf{F}_{0}$ near its radius of convergence is linked to the behavior of the $Y(x)$ near near $x_{c}$, see Figure \ref{fig:scenarii}.  For instance, in the example \eqref{eq:ng} an explicit computation shows that the singular behavior of $ \mathbf{F}_{0}(3-x) - \mathbf{F}_0 (3)$ is of the form $C x^{4/3}$  which indicates a polynomial decay of the critical cluster with exponent $n^{-7/3}$. This is very similar to the scenarios that happened in the enumeration of plane fully parked trees in Chen \cite{chen2021enumeration}, with the notable difference that in our model the dense case can be critical for the parking process. 
See also \cite[Theorem 1.2]{chen2019derrida} for the Derrida-Retaux model with heavy-tailed distributions where the free-energy has a peculiar behavior. We plan on studying those different behaviors in forthcoming works.


\subsection{General case of $d$-ary tree and GW trees}
Our work may be extended to parking on more general trees such as $d$-ary trees and perhaps supercritical Bienaym\'e--Galton--Watson trees. The crux is of course the enumeration of fully parked trees. In the case of supercritical Bienaym\'e--Galton--Watson trees, one would probably need the addition of another variable $z$ counting the number of adjacent vertices of the fully parked tree inside the global tree (in our case, we had a fixed number $n+1$ of  vertices adjacent from above to a fully parked tree of $ \mathbb{B}$ of size $n$). We wonder whether the randomness of the underlying tree may yield to different universality classes compared to the case of $d$-ary trees. 

\subsection{Links with Derrida-Retaux model} As mentioned several times in the paper, the Derrida-Retaux model is closely related to the parking process on $ \mathbb{B}$. We wonder whether a firm connection can be made between the two models.

\bibliographystyle{siam}
\bibliography{/Users/contat/Dropbox/Articles/22.ParkingGeneral/bibli.bib}
\end{document}